\newtheorem{theo}{Theorem}
\newtheorem{lemm}{Lemma}
\newcommand{\argmin}{\mathop{\rm arg~min}\limits}
\newcommand{\Tr}{\mathop{\rm tr}\nolimits}
\newcommand{\ordz}{O(1)}
\newcommand{\ordi}{O(n^{-1/2})}
\newcommand{\ordii}{O(n^{-1})}
\newcommand{\ordiii}{O(n^{-3/2})}
\newcommand{\ordiiii}{O(n^{-2})}
\begin{document}

\begin{frontmatter}

\title{Higher-order accuracy of multiscale-double bootstrap for testing regions}
\runtitle{multiscale-double bootstrap}
\author{\fnms{Hidetoshi}
 \snm{Shimodaira}\corref{}\ead[label=e1]{shimo@sigmath.es.osaka-u.ac.jp}\thanksref{t1}}
\thankstext{t1}{
Supported in part by Grant
KAKENHI (20500254, 24300106) from MEXT of Japan.}
\address{
Division of Mathematical Science\\Graduate School of
 Engineering Science\\
Osaka University\\
 1-3 Machikaneyama-cho\\Toyonaka, Osaka, Japan\\
 \printead{e1}}
%\affiliation{Osaka University}

\runauthor{H.~SHIMODAIRA}

\begin{abstract}
We consider hypothesis testing for the null hypothesis being represented
as an arbitrary-shaped region in the parameter space.  We compute an
approximate $p$-value by counting how many times the null hypothesis
holds in bootstrap replicates.  This frequency, known as bootstrap
probability, is widely used in evolutionary biology, but often reported
as biased in the literature.  Based on the asymptotic theory of
bootstrap confidence intervals, there have been some new attempts for
adjusting the bias via bootstrap probability without direct access to
the parameter value. One such an attempt is the double bootstrap which
adjusts the bias by bootstrapping the bootstrap probability.  Another
new attempt is the multiscale bootstrap which is similar to the
$m$-out-of-$n$ bootstrap but very unusually extrapolating the bootstrap
probability to $m=-n$.  In this paper, we employ these two attempts at
the same time, and call the new procedure as multiscale-double
bootstrap. By focusing on the multivariate normal model, we investigate
higher-order asymptotics up to fourth-order accuracy.  Geometry of the
region plays important roles in the asymptotic theory.  It was known in
the literature that the curvature of the boundary surface of the region
determines the bias of bootstrap probability.  We found out that the
``curvature of curvature'' determines the remaining bias of double
bootstrap.  The multiscale bootstrap removes these biases.  The
multiscale-double bootstrap is fourth order accurate with coverage
probability erring only $\ordiiii$, and it is robust against
computational error of parameter estimation used for generating
bootstrap replicates from the null distribution.
\end{abstract}

\begin{keyword}[class=MSC]
\kwd[Primary ]{62G10}
\kwd[; secondary ]{62G09, 62H15}
\end{keyword}

\begin{keyword}
\kwd{Bootstrap resampling}
\kwd{iterated bootstrap}
\kwd{approximately unbiased tests}
\kwd{fourth-order accuracy}
\kwd{scaling-law}
\kwd{mean curvature}
\kwd{bias correction}
\end{keyword}

\end{frontmatter}

\section{Introduction} \label{sec:intro}

We would like to compute approximate $p$-values by bootstrap methods for
testing null hypothesis $H_0: \mu\in H$ against alternative $H_1:
\mu\not\in H$ for a $q+1$ ($\ge2$) dimensional unknown parameter vector
$\mu\in \mathbb{R}^{q+1}$ and an arbitrary-shaped region $H \subset
\mathbb{R}^{q+1}$.  This is the problem of regions discussed in
\cite{Efron:Halloran:Holmes:1996:BCL} and
\cite{Efron:Tibshirani:1998:PR}, where the geometry of the shape of $H$
plays important roles. Their geometric argument is based on the
bias-corrected (BC) bootstrap confidence interval of
\cite{Efron:1985:BCI} for the multivariate normal model
\begin{equation} \label{eq:ynorm}
 Y \sim N_{q+1}(\mu, I_{q+1})
\end{equation}
with mean $\mu$ and covariance identity matrix $I_{q+1}$.  Similar
geometric argument is found in \cite{Efron:1987:BBC},
\cite{Diciccio:Efron:1992:MAC}, and \cite{Shimodaira:2004:AUT} for
exponential family of distributions up to terms of $\ordii$.  We focus
on the multivariate normal model (\ref{eq:ynorm}) in this paper, and
investigate higher-order asymptotics up to terms of $\ordiii$ for
fourth-order accuracy, hoping to get insights into more general
situations.

A simple example is the case of spherical region in
\cite{Efron:Tibshirani:1998:PR}.  Consider $n$ independent random
variables $X_1,\ldots, X_n \sim N_{q+1}(\eta,I_{p+1})$, and the null
hypothesis $\|\eta\|\le 1$, where $\|\eta\|^2=\eta_1^2+\cdots +
\eta_{p+1}^2$. The problem is also described in a transformed variable
$Y=\sqrt{n}\bar{X}$ with mean $\mu=\sqrt{n}\eta$ so that the region is
$H=\{\mu : \|\mu\| \le \sqrt{n}\,\}$. The dependency on $n$ is implicit
in our notation.  This example is simple enough to compute the exact
$p$-value as $P(\|Y\|^2 \ge \|y\|^2)$ by knowing that $\|Y\|^2$ follows
$\chi^2_{p+1}$, the chi-square distribution with degrees of freedom
$p+1$, of non-centrality $\|\mu\|^2$. However, it is not so easy to
compute the exact $p$-value for an arbitrary-shaped region $H$.

Having an observation $y\in\mathbb{R}^{q+1}$ of $Y$, we may generate
many replicates of $Y$ by the parametric bootstrap
\begin{equation} \label{eq:yboot}
 Y^*  \sim N_{q+1}(y, \sigma^2 I_{q+1})
\end{equation}
for some $\sigma^2>0$.  This corresponds to the non-parametric
``$m$-out-of-$n$'' bootstrap of \cite{Bickel:Gotze:vanZwet:1997:RFT} and
\cite{Politis:Romano:1994:LSC} with $\sigma^2=n/m$.  For the spherical
example, we may compute $Y^* = \sqrt{n}(X_1^*+\cdots + X_m^*)/m$ by
resampling $\{X_1^*,\ldots,X_m^*\}$ with replacement from
$\{x_1,\ldots,x_n\}$.  In this paper, we do not pursue the
non-parametric bootstrap, but focus on (\ref{eq:yboot}) for extending
the asymptotic theory of \cite{Efron:1985:BCI}.

Generating many $Y^*$'s, we count how many times they fall in $H$.  This
frequency is called as \emph{bootstrap probability} (BP) and it has been
used extensively since \cite{Felsenstein:1985:CLP} for approximating the
$p$-value of testing phylogenetic trees in evolutionary biology.  It is
also named ``empirical strength probability'' in
\cite{Liu:Singh:1997:NLP}. Although the BP works as an approximate
$p$-value in the frequentist sense, it is often reported as biased and
there have been some attempts for improving the accuracy;
\cite{Hillis:Bull:1993:ETB}, \cite{Felsenstein:Kishino:1993:ITS},
\cite{Newton:1996:BPL}, \cite{Efron:Halloran:Holmes:1996:BCL},
\cite{Efron:Tibshirani:1998:PR},
\cite{Shimodaira:2002:AUT,Shimodaira:2004:AUT,Shimodaira:2008:TRN}.

Assuming sufficiently large number of replicates, we define the BP as
\[
 \mathrm{BP}_{\sigma^2}(H|y) = P_{\sigma^2}(Y^* \in H | y),
\]
where $P_{\sigma^2}(\cdot|y)$ indicates the probability with respect to
($\ref{eq:yboot}$). The variance is usually $\sigma^2=1$ and we simply
denote BP or $ \mathrm{BP}(H|y)$ for $\mathrm{BP}_{1}(H|y)$.  BP is
interpreted as the Bayesian posterior probability of $H$ under
(\ref{eq:ynorm}), because the posterior distribution is $\mu|y \sim
N_{p+1}(y,I_{p+1})$ for the improper uniform prior distribution.

For a specified significance level $0<\alpha<1$, we will reject $H_0$ if
$\mathrm{BP}<\alpha$. 
It follows from eq.~(2.22) of \cite{Efron:Tibshirani:1998:PR}
that the rejection probability is expressed as
\begin{equation} \label{eq:rejprob-bp}
 P\Bigl( \mathrm{BP}(H|Y) < \alpha \Bigr) = \Phi(z_\alpha +
 2\gamma_1) + \ordii
\end{equation}
for $\mu\in\partial H$, where $\gamma_1=\ordi$ is the \emph{mean
curvature} of $\partial H$ at $\mu$ in terms of differential geometry.
Here $\partial H$ denotes the boundary surface of the region $H$,
$\Phi(\cdot)$ is the cumulative distribution function of $N(0,1)$, and
$z_\alpha=\Phi^{-1}(\alpha)$.  A generalization of (\ref{eq:rejprob-bp})
will be proved later in Theorem~\ref{thm:bpacc}.  The rejection
probability of unbiased tests should be equal to $\alpha$ for
$\mu\in\partial H$, and the bias is defined as the deviation of
rejection probability from $\alpha$.  According to
(\ref{eq:rejprob-bp}), the bias of BP is determined mostly by the mean
curvature, which is zero, say, if $\partial H$ is flat. More generally,
the mean curvature is zero everywhere on a ``minimal surface'' that
locally minimizes its area like soap membranes.  We may reject $H_0$ too
much (large type-I error and many false positives) if the curvature is
positive, and reject $H_0$ too little (conservative and few true
discoveries) if the curvature is negative. The sign of $\gamma_1$ is
defined in the way that $\gamma_1>0$ when $\partial H$ is curved toward
$H$.

The bootstrap iteration is a general idea applicable to a wide range of
problems for improving accuracy, and it has been applied to
bootstrap confidence intervals of a real parameter;
\cite{Hall:1986:BCI}, \cite{Beran:1987:PRL}, \cite{Loh:1987:CCC},
\cite{Hinkley:Shi:1989:ISN}, \cite{Martin:1990:BIC},
\cite{Hall:1992:BEE}, \cite{Efron:Tibshirani:1993:ITB},
\cite{Newton:Geyer:1994:BRM}, \cite{Lee:Young:1995:AIB},
\cite{Diciccio:Efron:1996:BCI}, \cite{Hall:Maesono:2000:WBA}.  From the
duality of confidence intervals and hypothesis testing, we may compute a
$p$-value from the iterated bootstrap confidence intervals of a real
parameter, say, $\|\mu\|$ for the spherical example.  However,
additional consideration is needed for computing the $p$-value only from
the frequency of $\{ \|y^*\| \le \sqrt{n} \,\}$ without access to the
bootstrap distribution of $\|y^*\|$.  \cite{Efron:Tibshirani:1998:PR}
applied the bootstrap iteration to BP for adjusting the bias, and called
the bias-corrected BP as a calibrated confidence level.  In this paper,
we call it as \emph{double bootstrap probability} (DBP).

Similar to the bias of BP, the remaining bias of DBP is again
interpreted as a geometric quantity of $\partial H$.  Let
$\beta_3=\ordiii$ be the ``mean curvature of the mean curvature'' of
$\partial H$. We found that $\beta_3$ determines
the bias of DBP. In fact, the rejection probability is 
\begin{equation} \label{eq:rejprob-dbp}
 P\Bigl( \mathrm{DBP}(H|Y) < \alpha\Bigr) = \Phi(z_\alpha -
 2\beta_3) + \ordiiii
\end{equation}
as shown in Theorem~\ref{thm:dbpacc}. Related results are given in
\cite{Hall:1992:BEE} and \cite{Lee:Young:1995:AIB} for the coverage
probability of the iterated bootstrap confidence intervals under the
smooth function model. We can tell from (\ref{eq:rejprob-dbp}) that DBP
is very accurate for the spherical example, because $\beta_3=0$ for
spheres. For constant-mean-curvature surfaces, such as plane, cylinder,
sphere, or intuitively soap bubbles, we have always $\beta_3=0$, and DBP
is very accurate.  For other surfaces, however, the magnitude of
$\beta_3$ can be large.

In this paper, we discuss several bootstrap methods for improving the
accuracy of BP. An approximately unbiased $p$-value is said to be $k$-th
order accurate if the bias is $O(n^{-k/2})$ asymptotically. BP is only
first order accurate, and DBP is third order accurate. We attempt
improving BP and DBP via the \emph{multiscale bootstrap} of
\cite{Shimodaira:2002:AUT,Shimodaira:2004:AUT,Shimodaira:2008:TRN}. A
key idea is to change $\sigma^2$ in (\ref{eq:yboot}).  We derive the
scaling-law of BP and DBP with respect to $\sigma^2$, and extrapolate
these values formally to $\sigma^2=-1$, or $m=-n$ in the non-parametric
bootstrap. The idea is analogous to the SIMEX, simulation-extrapolation,
method for measurement error models of \cite{Cook:Stefanski:1994:SIE}.
It turns out that $\gamma_1$ in (\ref{eq:rejprob-bp}) and $\beta_3$ in
(\ref{eq:rejprob-dbp}) disappear as $\sigma^2$ approaching $-1$.  Thus
the multiscale bootstrap improves both BP and DBP; the bias-corrected BP
is third-order accurate, and the bias-corrected DBP is fourth-order
accurate. This is the main thrust of the paper. We will prove the main
results in Section~\ref{sec:asymptotic-analysis} after preparing
geometric tools in Section~\ref{sec:geometry}.

The bias-corrected BP via multiscale bootstrap has been already used for
testing phylogenetic trees in \cite{Shimodaira:Hasegawa:2001:CAC} and
hierarchical clustering in \cite{Suzuki:Shimodaira:2006:PRA}, and the
hypothesis test is referred to as ``approximately unbiased'' (AU) test
in the literature. For the newly proposed bias-corrected DBP, we call the
procedure as \emph{multiscale-double bootstrap}, and the hypothesis test
as ``double approximately unbiased'' (DAU) test. This procedure is new
and different from the two-step multiscale bootstrap of
\cite{Shimodaira:2004:AUT} which adjusts AU \emph{without}
double-bootstrapping for exponential family of distributions.

\section{Conventional testing procedures} \label{sec:lrtest}

For representing $H$, we use $(u,v)$ coordinates with
$u=(u_1,\ldots,u_q)\in \mathbb{R}^q$ and $v\in \mathbb{R}$.  Given a
smooth function $h(u)$ of $u\in \mathbb{R}^q$, we specify a region as $
\mathcal{R}(h) = \{ (u,v) \mid v \le -h(u), u\in \mathbb{R}^q \}$, and
assume that $H=\mathcal{R}(h)$. The boundary surface $\partial H $ is
denoted as $ \mathcal{B}(h)=\{(u,v)\mid v=-h(u), u\in \mathbb{R}^q\}$.
For example,
\begin{equation} \label{eq:huexample}
 h(u)=(h_0^2+ u^2/3)^{1/2}
\end{equation}
 with $q=1$, $h_0=0.1$ is shown in Fig~\ref{fig1}.  The region with
$h_0>0$ is related to the confidence limit of the product $\mu_1 \mu_2 $
discussed in \cite{Efron:1985:BCI}, and the region with $h_0\to0$ is
related to the multiple comparisons problem as mentioned later.
Observing $y=(1/\sqrt{2},\sqrt{8/3})=(0.71,1.63)$, say, we would like to
evaluate the chance of $H_0$ being true. We will compute $p$-values by
several methods as shown in Table~\ref{tab1}. Results are also shown for
$y=(3.18,0.20)$.  We occasionally come back to this example throughout
the paper.

\begin{figure}[htb] % figure1
 \includegraphics[width=10cm]{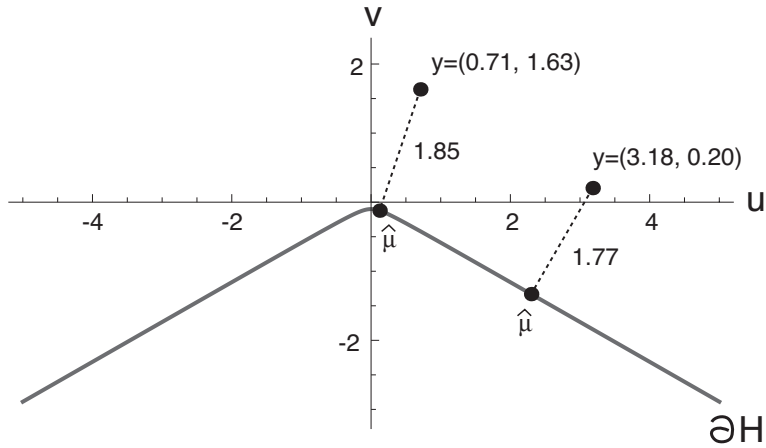} 
\caption{The two cases of observation $y$ and restricted
MLE $\hat\mu(H|y)$.  Signed distances are indicated by dotted lines. The
boundary surface $\partial H$ for (\ref{eq:huexample}) with $h_0=0.1$ is
drawn by solid curve. The null hypothesis is represented as the region
below the curve.} \label{fig1}
\end{figure}

\begin{table}[htb] % table1
\caption{$p$-values (in percent) computed by several methods.}\label{tab1}
 \begin{tabular}{lccccc}
\hline
observation\quad $y$  & \multicolumn{2}{c}{(0.71, 1.63)} & & \multicolumn{2}{c}{(3.18, 0.20)} \\
\cline{2-3} \cline{5-6}
hypothesis \quad $h_0$ & 0.1 & 0.0 & & 0.1 & 0.0\\
\hline
\multicolumn{6}{c}{conventional testing procedures}\\
LR & 6.4 & 7.5 & & 7.7 & 7.9\\
signed LR & 3.2 & 3.8 & & 3.8 & 3.9\\
$S(y)$ & 18.1 & 20.5 & & 20.8 & 21.0 \\
MCB  & -   & 6.9 & & -   & 6.9\\
\hline
\multicolumn{6}{c}{bootstrap methods}\\
BP  & 1.8 & 2.0 & & 3.8 & 3.8 \\
AU2 & 4.2 & 4.6 & & 3.9 & 3.9 \\
AU3 & 5.5 & 6.2 & & 3.7 & 3.7 \\
DBP & 4.8 & 6.1 & & 3.9 & 4.0\\
DAU & 5.4 & 6.9 & & 3.7 & 3.7\\
\hline
 \end{tabular}
\end{table}

Let us look at likelihood ratio (LR) tests first. We consider null
hypothesis $H_0': \mu\in\partial H$ against alternative $H_1':
\mu\not\in\partial H$.  Since the log-likelihood function is simply
$\ell(\mu; y)=-\tfrac{1}{2}\|y-\mu\|^2$, the maximum likelihood estimate
for $\mu\in\mathbb{R}^{q+1}$ is $y$, and the restricted maximum
likelihood estimate for $\mu\in\partial H$ is given by
\begin{equation} \label{eq:projection}
 \hat \mu(H|y) = \argmin_{\mu\in \partial H} \|y-\mu \|.
\end{equation}
By numerical optimization, we get $\hat\mu(H|y)=(0.12,-0.12)$ for
$y=(0.71,1.63)$, and the LR statistic is then $2\ell(y;
y)-2\ell(\hat\mu(H|y); y)=\| y-\hat\mu(H|y)\|^2 = 1.85^2 = 3.42$.  The
$p$-value is computed as $P(\chi_1^2 \ge 3.42)=0.064$.

However, the following two issues of LR tests are pointed out in
\cite{Efron:1985:BCI} and \cite{Efron:Tibshirani:1998:PR}. (i)~The LR
test ignores the side of $\partial H$ in which $y$ lies.  We can improve
the LR test by replacing the alternative $H_1'$ by $H_1$.
\cite{Mccullagh:1984:LS} introduced the signed LR statistic
$\hat\lambda=\pm \sqrt{2\ell(y; y)-2\ell(\hat\mu(H|y); y)}$ with
positive sign for $y\not \in H$ and negative sign for $y\in H$.
\cite{Efron:1985:BCI} called $\hat\lambda=\pm \| y-\hat\mu(H|y)\|$ as
\emph{signed distance} for the multivariate normal model.  Since
$\hat\lambda \sim N(0,1)$ under $H_0'$ asymptotically, the $p$-value for
testing $H_0'$ against $H_1$ is computed as $1-\Phi(1.85)=0.032$, which
is half of the $p$-value of the LR test.  This one-sided test of
$\hat\lambda$ has twice the power of the (two-sided) LR test.  (ii)~The
LR test and the signed LR test are biased by $\ordi$.  This bias is
corrected by the Bartlett adjustment, which works in a way very similar
to eliminating $\gamma_1$ from (\ref{eq:rejprob-bp}). Our bootstrap
methods will compute $p$-values similar to the bias-corrected singed LR
test.

For testing $H_0$ against $H_1$, we could construct a confidence set of
$\mu$ as
\[
 S(y) = \{\mu \mid \|\mu-y\|^2\le \chi_{2,1-\alpha}^2\},
\]
where $\chi_{2,1-\alpha}^2$ is the upper $\alpha$ point of $\chi_2^2$.
We will reject $H_0$ if the intersection of $S(y)$ and $H$ is empty. The
$p$-value is computed as $P(\chi_2^2 \ge 3.42)=0.181$.  This method
controls the type-I error for any $H$. However, it is very conservative
and $p$-value is unnecessarily large, because $S(y)$ does not take
account of the shape of $H$.

In the case of $h_0=0$, the multiple comparisons with the best (MCB)
procedure of \cite{Hsu:1981:SCI} can be used for testing $H_0$ against
$H_1$.  Observing $\bar x=(\bar x_1,\bar x_2, \bar x_3)$ from $\bar
X\sim N_3(\eta,I_3/n)$ with $\eta=(\eta_1,\eta_2,\eta_3)$, we would like
to know if $\eta_1$ is the largest among the three population means.
MCB assumes the least favorable configuration $\eta_1=\eta_2=\eta_3$ for
computing the null distribution of the test statistic
$t=\sqrt{n}\max(\bar x_2-\bar x_1,\bar x_3-\bar x_1)$.  The null
hypothesis $\eta_1\ge \max(\eta_2,\eta_3)$ is represented as the
cone-shaped region $v\le -|u|/\sqrt{3}$ by transformation
$u=\sqrt{n/2}(\eta_3 - \eta_2)$ and $v=\sqrt{n/6}(\eta_2 + \eta_3 - 2
\eta_1)$. For the two cases of $y$ in Table~\ref{tab1}, the test
statistic is actually the same value $t=2.5$ and $p$-value is $P(T\ge
t)=0.069$. Since MCB is unbiased at $\mu=(0,0)$, i.e., the vertex of the
cone, the $p$-value will be a reasonable value for
$y=(0.71,1.63)$. However, MCB becomes conservative as $\mu$ moves away
from the vertex, and the $p$-value may be unnecessarily large for
$y=(3.18,0.20)$. MCB will be compared with bootstrap methods in the
simulation study of Section~\ref{sec:simulation}.

\section{Bootstrap Methods} \label{sec:methods}

\subsection{Asymptotic theory of surfaces} \label{sec:class-s}

We assume that all the axes in $(u,v)$ coordinates are scaled by
$\sqrt{n}$ asymptotically as $n\to\infty$. This is easily verified for
the spherical example of Section~\ref{sec:intro}.  We only have to
assume that $H$ is represented as $\mathcal{R}(h)$ in a neighborhood of
a point of interest.

We consider the Taylor series of $h(u)$ at $u=0$ as
\begin{equation} \label{eq:hclass-s}
 h(u) \simeq h_0 + h_i u_i + h_{ij} u_i u_j +
h_{ijk} u_i u_j u_k +
h_{ijkl} u_i u_j u_k u_l,
\end{equation}
where $\simeq$ denotes the equality correct up to $\ordiii$ erring
 $\ordiiii$, and the summation convention such as $h_{ij} u_i u_j =
 \sum_{i=1}^q \sum_{j=1}^q h_{ij}u_i u_j$ is used.  Then, the second
 derivative
\[
 h_{ij} = \frac{1}{2} \frac{\partial^2 h(u)}{\partial u_i \partial
 u_j}\Bigr|_0
\]
is $\ordi$, because the numerator is $O(\sqrt{n}\,)$ and the denominator
is $O(n)$.  Similarly, the $k$-th order derivatives are
$O(n^{-(k-1)/2})$, $k\ge2$.  As $n\to\infty$, all these derivatives
approaches zero, and $\partial H$ becomes a flat surface.

We can always assume that $h_0=0$, $h_i=0$ by taking the origin $(0,0)$
at a point on $\partial H$ and the $u_1,\ldots,u_q$ axes in directions
tangent to $\partial H$. These $(u,v)$ coordinates are used in
eq.~(2.10) of \cite{Efron:Tibshirani:1998:PR} for representing $H$.  The
mean curvature of $\partial H$ at $(0,0)$ is defined as
\[
 \gamma_1 = \frac{1}{2} \sum_{i=1}^q
\frac{\partial^2 h(u)}{\partial u_i \partial
 u_i}\Bigr|_0.
\]
The mean curvature of $\partial H$ at $(u,-h(u))$, denoted as
$\gamma_1(h,u)$, is defined similarly by taking the origin there. The
asymptotic expression of $\gamma_1(h,u)$ will be given later in
Section~\ref{sec:geometricquantities}. The mean curvature of the mean
curvature of $\partial H$ at $(0,0)$ is then expressed as
\begin{equation} \label{eq:curvature-of-curvature}
 \beta_3 =  \frac{1}{2} \sum_{i=1}^q
\frac{\partial^2 \gamma_1(h,u)}{\partial u_i \partial
 u_i}\Bigr|_0.
\end{equation}

In the next sections, we will show asymptotic expansions of bootstrap
methods.  It is convenient for the argument there to assume $h_0=\ordz$
and $h_i=\ordii$ by relaxing the assumptions of $h_0=0$ and $h_i=0$.
For $\lambda_0\in \mathbb{R}$, we assume that the observation is
\[
 y=(0,\lambda_0-h_0)
\]
in the $(u,v)$ coordinates. We assume $\lambda_0=\ordz$ for the
local alternatives; in the spherical example, say, $\eta$ approaches
the boundary surface $\|\eta\|=1$ with distance $\ordi$.  Although $u_i$
axes are slightly tilted from the tangent space, the signed distance is
$\hat\lambda = \lambda_0 (1 + O(h_i^2)) \simeq \lambda_0$, meaning that
we can ignore the influence of $h_i$.

We say that a smooth function $h$ belongs to class $\mathcal{S}$ if it
is expressed asymptotically as (\ref{eq:hclass-s}) with coefficients
\begin{equation} \label{eq:classscoef}
h_0=\ordz, h_i=\ordii, h_{ij}=\ordi, h_{ijk}=\ordii,
h_{ijkl}=\ordiii.
\end{equation}
For $h\in \mathcal{S}$, we define the following
quantities representing geometric properties of $\partial H$ at
$(0,-h(0))$,
\begin{equation} \label{eq:gammahij}
\begin{split}
 \gamma_1&=h_{ii}=\ordi,\quad
  \gamma_2=h_{ij}h_{ij}=\ordii,\\
  \gamma_3&=h_{ij} h_{jk}  h_{ki}=\ordiii,\quad
 \gamma_4=h_{iijj}=\ordiii.
\end{split}
\end{equation}
The first three quantities are also written as
$\gamma_1=\Tr(D)$, $\gamma_2=\Tr(D^2)$, $\gamma_3=\Tr(D^3)$ using
$q\times q$ matrix $D$ with elements $(D)_{ij}=h_{ij}$.  
Asymptotic expansions of bootstrap methods will be expressed  up to
$\ordiii$ terms by 
using only
\begin{equation} \label{eq:betabygamma}
\begin{split}
\beta_0 &=\lambda_0=\ordz,\quad
  \beta_1=\gamma_1 - \lambda_0 \gamma_2
 + \tfrac{4}{3} \lambda_0^2  \gamma_3=\ordi, \\
\beta_2&=3\gamma_4 -
 \gamma_1 \gamma_2 -\tfrac{4}{3} \gamma_3=\ordiii,\quad
 \beta_3 = 6\gamma_4 - 2\gamma_1\gamma_2 - 4\gamma_3 = \ordiii.
\end{split}
\end{equation}
We will verify in Section~\ref{sec:geometricquantities} that the above
definition of $\beta_3$ in (\ref{eq:betabygamma}) is consistent with
(\ref{eq:curvature-of-curvature}).

\subsection{Asymptotic expansion of the bootstrap probability} 
\label{sec:asymptotic-bp}

\cite{Efron:Tibshirani:1998:PR} showed the asymptotic expansion of
$\mathrm{BP}(H|y)$ up to $\ordii$ terms.  We generalize their
eq.~(2.19) to include $\ordiii$ terms. For convenience, we use
\[
 \bar\Phi(x) = 1-\Phi(x) = \Phi(-x).
\]
All the proofs of theorems are found in Appendix.
\begin{theo}[Bootstrap probability] \label{thm:bp}
Consider $y=(0,\lambda_0-h_0)$ and the region $H=\mathcal{R}(h)$ for
$h\in \mathcal{S}$.  The bootstrap probability for $\sigma^2=1$ is
then expressed asymptotically as
\begin{equation} \label{eq:bpexpgamma}
\mathrm{BP}(H|y) \simeq \bar\Phi\Bigl[
\lambda_0 + \gamma_1  - \lambda_0 \gamma_2 +
3 \gamma_4 - \gamma_1 \gamma_2 -\tfrac{4}{3}(1-\lambda_0^2) \gamma_3
\Bigr].
\end{equation}
Using the coefficients defined in (\ref{eq:betabygamma}), it becomes
\begin{equation}\label{eq:bpexpbeta}
\mathrm{BP}(H|y) 
\simeq
\bar\Phi(\beta_0 + \beta_1 + \beta_2).
\end{equation}
\end{theo}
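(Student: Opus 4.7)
The plan is to write $\mathrm{BP}(H|y)$ as a Gaussian expectation in $q$ variables, Taylor-expand both the integrand and its proposed closed form, compute moments via Wick's formula, and verify term-by-term agreement to $\ordiii$.

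\textbf{Step 1: Reduction to a Gaussian integral.} Writing $Y^*=(U^*,V^*)$ with $U^*\sim N_q(0,I_q)$ and $V^*\sim N(\lambda_0-h_0,1)$ independent, integrating out $V^*$ gives
\[
\mathrm{BP}(H|y) = P\bigl(V^* \le -h(U^*)\bigr) = E\bigl[\bar\Phi(\lambda_0+W)\bigr], \qquad W := h(U^*)-h_0,
\]
and the class-$\mathcal{S}$ Taylor expansion (\ref{eq:hclass-s}) provides
\[
W \simeq h_i U_i^* + h_{ij} U_i^* U_j^* + h_{ijk} U_i^* U_j^* U_k^* + h_{ijkl} U_i^* U_j^* U_k^* U_l^*.
\]

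\textbf{Step 2: Expansion and moments.} Expand $\bar\Phi$ about $\lambda_0$ as
\[
\bar\Phi(\lambda_0+W) = \bar\Phi(\lambda_0) - \phi(\lambda_0)W + \tfrac{1}{2}\lambda_0\phi(\lambda_0)W^2 + \tfrac{1}{6}(1-\lambda_0^2)\phi(\lambda_0)W^3 + O(W^4)
\]
and take expectations term by term. The order assignments (\ref{eq:classscoef}) together with the vanishing of odd Gaussian moments restrict the surviving Wick contractions to a short list; Isserlis' theorem then yields
\begin{align*}
E[W] &\simeq h_{ij}\delta_{ij} + h_{ijkl}(\delta_{ij}\delta_{kl}+\delta_{ik}\delta_{jl}+\delta_{il}\delta_{jk}) = \gamma_1 + 3\gamma_4,\\
E[W^2] &\simeq E\bigl[(h_{ij}U_i^*U_j^*)^2\bigr] = \gamma_1^2 + 2\gamma_2,\\
E[W^3] &\simeq E\bigl[(h_{ij}U_i^*U_j^*)^3\bigr] = \gamma_1^3 + 6\gamma_1\gamma_2 + 8\gamma_3,
\end{align*}
the last line being the standard formula $E[(U^{*\top}DU^*)^3] = (\Tr D)^3 + 6(\Tr D)(\Tr D^2) + 8\Tr D^3$ applied to $D=(h_{ij})$. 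All remaining contributions, including every cross-term involving $h_i$ or $h_{ijk}$ and every $E[W^k]$ with $k\ge 4$, are of order $\ordiiii$ or smaller because $h_i h_{ijj}=\ordiiii$, $h_{ijk}h_{lmn}=\ordiiii$, and $(h_{ij})^4=\ordiiii$.

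\textbf{Step 3: Identification.} Substituting these moments produces a polynomial in $\lambda_0,\gamma_1,\gamma_2,\gamma_3,\gamma_4$ multiplying $\phi(\lambda_0)$, which after collecting the $\gamma_3$ and $\gamma_1\gamma_2$ pieces is exactly the Taylor expansion of the RHS of (\ref{eq:bpexpgamma}). To recast this in the $\beta$-coordinates, I would Taylor-expand $\bar\Phi(\lambda_0+T)$ with $T=\beta_1+\beta_2$, using $T^2 \simeq \beta_1^2 \simeq \gamma_1^2 - 2\lambda_0\gamma_1\gamma_2$ and $T^3 \simeq \beta_1^3 \simeq \gamma_1^3$ to $\ordiii$ accuracy, and match coefficients; the specific choices $\beta_1 = \gamma_1 - \lambda_0\gamma_2 + \tfrac{4}{3}\lambda_0^2\gamma_3$ and $\beta_2 = 3\gamma_4 - \gamma_1\gamma_2 - \tfrac{4}{3}\gamma_3$ in (\ref{eq:betabygamma}) are exactly what absorbs the $\gamma_3$ and $\gamma_1\gamma_2$ contributions from $E[W^3]$ into the argument of $\bar\Phi$, yielding (\ref{eq:bpexpbeta}).

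The main obstacle is the combinatorial bookkeeping: one must be confident that no surviving term at order $\ordiii$ is missed among the many possible Wick pairings, and in particular that the sub-leading Taylor coefficients $h_i$ and $h_{ijk}$ genuinely decouple at this order. Once the moment list in Step~2 is established, the remaining work is a direct algebraic matching of two polynomials in $\gamma_1,\gamma_2,\gamma_3,\gamma_4,\lambda_0$.
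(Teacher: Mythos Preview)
Your proposal is correct and follows essentially the same route as the paper: reduce $\mathrm{BP}(H|y)$ to $E[\bar\Phi(\lambda_0+W)]$ by integrating out the $v$-coordinate, Taylor-expand $\bar\Phi$ to third order, evaluate $E[W]$, $E[W^2]$, $E[W^3]$ via Isserlis' formula while discarding $\ordiiii$ contributions, and then match the resulting polynomial against the Taylor expansion of the claimed $\bar\Phi$-expression. The only cosmetic difference is that the paper verifies (\ref{eq:bpexpgamma}) by re-expanding $\bar\Phi(a+x)$ with the target argument $x$ rather than first collecting and then recognizing, but the computation is identical.
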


\cite{Efron:Tibshirani:1998:PR} also showed a third-order accurate
$p$-value. We generalize their eq.~(2.17) to include $\ordiii$ terms.
We will show later in Section~\ref{sec:asymptotic-pv} that the $p$-value
defined below is fourth-order accurate.
\begin{equation}\label{eq:pvalexpbeta}
 \mathrm{PV}(H|y) \simeq
\bar\Phi(
\beta_0 - \beta_1 - \beta_2 +  \beta_3 
).
\end{equation}
Comparing (\ref{eq:bpexpbeta}) with (\ref{eq:pvalexpbeta}), we find that
BP differs from PV by $\ordi$ and so BP is only first-order accurate in
general.

For simplifying geometric argument, here we assume $h_0=h_i=0$ and
$y=(0,\lambda_0)$ by taking the origin of the coordinates at
$\hat\mu(H|y)$. Then the signed distance is $\hat\lambda=\lambda_0$, and
the geometric quantities, such as the mean curvature $\gamma_1$, are now
defined at $\hat\mu(H|y)=(0,0)$. Then the two geometric quantities,
$\lambda_0$ and $\gamma_1$, determine the $p$-value of singed
$\mathrm{LR} = \bar\Phi(\lambda_0)$, $\mathrm{BP}=\bar\Phi(\lambda_0
+\gamma_1) + \ordii$, and $\mathrm{PV}=\bar\Phi(\lambda_0 -\gamma_1) +
\ordii$ up to $\ordi$ terms.  For $\gamma_1>0$, they are ordered as BP
$<$ signed LR $<$ PV, and so $P(\mathrm{BP} < \alpha)$ will be larger
than $P(\mathrm{PV} < \alpha)\simeq \alpha$.  This confirms
(\ref{eq:rejprob-bp}), where $\gamma_1$ is defined at $\mu$ instead of
$\hat\mu$ though.

Let us look at the numerical example of $y=(0.71,1.63)$ with $h_0=0.1$
in Table~\ref{tab1}. We know $\gamma_1$ is positive by looking at the
convex shape of $H$, and $\mathrm{BP}=0.018$ is, in fact, smaller than
signed LR$=0.032$.  From these two values, the mean curvature can be
estimated by
\[
 \gamma_1 = \bar\Phi^{-1}(\mathrm{BP}) -
 \bar\Phi^{-1}(\mathrm{signed\,LR}) + \ordii,
\]
which gives $\gamma_1 \approx \bar\Phi^{-1}(0.018) -
\bar\Phi^{-1}(0.032) = 2.08-1.85=0.23$ at $\hat \mu=(0.12,-0.12)$.  We
can then compute PV up to $\ordi$ terms as $\mathrm{PV}\approx
\bar\Phi(1.85 - 0.23) = 0.053$, which is close to AU3, DBP, and DAU
explained in the next sections.  On the other hand, the mean curvature
$\gamma_1 \approx 0.002$ is much smaller at $\hat\mu=(2.30,-1.33)$ for
$y=(3.18,0.20)$, and $\mathrm{PV}\approx 0.038$ is not different from
$\mathrm{BP} = 0.038$; BP does not need bias correction and all the
bootstrap methods are very close to the signed LR in Table~\ref{tab1}.

\cite{Efron:1985:BCI} and \cite{Efron:Tibshirani:1998:PR} computed PV up
to $\ordi$ terms in the same way as above but using only bootstrap
probabilities. Their bias-corrected (BC) bootstrap method estimates the
mean curvature by
\[
 \gamma_1 = \bar\Phi^{-1}\Bigl(\mathrm{BP}(H|\hat\mu(H|y))\Bigr)
+ \ordiii,
\]
which is verified by letting $\lambda_0=0$ in (\ref{eq:betabygamma}) and
(\ref{eq:bpexpbeta}).  In the next sections, we attempt computing PV up
to higher-order terms using only bootstrap probabilities.

\subsection{Multiscale bootstrap}

For adjusting the bias of BP, we would like to express
$\mathrm{BP}_{\sigma^2}$ as a function of $\sigma^2$.
\cite{Shimodaira:2002:AUT,Shimodaira:2004:AUT} showed the asymptotic
expansion of $\mathrm{BP}_{\sigma^2}(H|y)$ up to $\ordii$ terms.  Here
we include $\ordiii$ terms to it. This is an immediate consequence of
Theorem~\ref{thm:bp} via a rescaling argument.
\begin{theo}[Scaling-law of the bootstrap probability] \label{thm:bps}
 For the $H$ and $y$ given in Theorem~\ref{thm:bp}, the bootstrap
 probability for $\sigma^2>0$ is expressed as
\begin{equation} \label{eq:rescaleregion}
 \mathrm{BP}_{\sigma^2}(H|y) =  \mathrm{BP}(\sigma^{-1}H|\sigma^{-1}y),
\end{equation}
where $ \sigma^{-1} H=\{\sigma^{-1} y : y \in H \}$.
By replacing 
\begin{equation} \label{eq:rescalebeta}
\beta_0 \to \sigma^{-1}\beta_0,\quad
\beta_1 \to \sigma \beta_1,\quad
\beta_2 \to \sigma^3 \beta_2
\end{equation}
in (\ref{eq:bpexpbeta}), the right hand side of (\ref{eq:rescaleregion})
 is expressed asymptotically as
\begin{equation} \label{eq:bpsexp}
 \mathrm{BP}_{\sigma^2}(H|y) \simeq
\bar\Phi\Bigl[
 \beta_0 \sigma^{-1} +\beta_1  \sigma + \beta_2 \sigma^3 
\Bigr].
\end{equation}
\end{theo}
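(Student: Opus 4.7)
The plan is to derive (\ref{eq:bpsexp}) directly from Theorem~\ref{thm:bp} by a simple scaling change of variables, with essentially no new asymptotic work. For the first identity (\ref{eq:rescaleregion}), observe that if $Y^*\sim N_{q+1}(y,\sigma^2 I_{q+1})$, then $\sigma^{-1}Y^* \sim N_{q+1}(\sigma^{-1}y,I_{q+1})$. Consequently, $\mathrm{BP}_{\sigma^2}(H|y) = P(Y^*\in H\mid y) = P(\sigma^{-1}Y^* \in \sigma^{-1}H \mid y) = \mathrm{BP}(\sigma^{-1}H\mid \sigma^{-1}y)$, which reduces the variance-$\sigma^2$ problem to the unit-variance problem for the rescaled region and observation.

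To apply Theorem~\ref{thm:bp} to the rescaled problem, I would represent $\sigma^{-1}H$ as $\mathcal{R}(\tilde h)$ in the rescaled coordinates $(\tilde u,\tilde v)=(\sigma^{-1}u,\sigma^{-1}v)$. Rewriting $\tilde v \le -\sigma^{-1}h(\sigma\tilde u)$ identifies $\tilde h(\tilde u)=\sigma^{-1}h(\sigma\tilde u)$, whose Taylor coefficients at the origin are $\tilde h_0=\sigma^{-1}h_0$, $\tilde h_i=h_i$, $\tilde h_{ij}=\sigma h_{ij}$, $\tilde h_{ijk}=\sigma^2 h_{ijk}$, and $\tilde h_{ijkl}=\sigma^3 h_{ijkl}$. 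Since $\sigma$ is held fixed as $n\to\infty$, each new coefficient still satisfies the class $\mathcal{S}$ order conditions (\ref{eq:classscoef}); similarly, $\sigma^{-1}y=(0,\tilde\lambda_0-\tilde h_0)$ with $\tilde\lambda_0=\sigma^{-1}\lambda_0=\ordz$, so Theorem~\ref{thm:bp} legitimately applies to the pair $(\tilde h,\tilde\lambda_0)$ with the same remainder order $\ordiiii$.

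The final step is to collect powers of $\sigma$. The scalar quantities in (\ref{eq:gammahij}) transform as $\tilde\gamma_1=\sigma\gamma_1$, $\tilde\gamma_2=\sigma^2\gamma_2$, $\tilde\gamma_3=\sigma^3\gamma_3$, $\tilde\gamma_4=\sigma^3\gamma_4$. Substituting these together with $\tilde\lambda_0=\sigma^{-1}\lambda_0$ into (\ref{eq:betabygamma}) and simplifying yields $\tilde\beta_0=\sigma^{-1}\beta_0$, $\tilde\beta_1=\sigma\beta_1$, and $\tilde\beta_2=\sigma^3\beta_2$; plugging these into (\ref{eq:bpexpbeta}) for $\tilde h$ produces the stated expansion (\ref{eq:bpsexp}). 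No genuine obstacle is anticipated, since the argument is purely algebraic. The only point worth checking carefully is the bookkeeping for $\tilde\beta_1$, where the three terms $\tilde\gamma_1$, $\tilde\lambda_0\tilde\gamma_2$, and $\tilde\lambda_0^2\tilde\gamma_3$ must all collapse to the common power $\sigma^1$, and for $\tilde\beta_2$, where the three terms must all collapse to $\sigma^3$. That these cancellations occur is precisely the homogeneity built into the definitions (\ref{eq:betabygamma}), and it is what makes the scaling-law hold term by term in the expansion.
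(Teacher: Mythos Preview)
Your proposal is correct and follows essentially the same route as the paper: the distributional identity $\sigma^{-1}Y^*\sim N_{q+1}(\sigma^{-1}y,I_{q+1})$ yields (\ref{eq:rescaleregion}), and then the rescaled boundary function $\tilde h(\tilde u)=\sigma^{-1}h(\sigma\tilde u)$ produces exactly the coefficient transformations $h_{ij}\to\sigma h_{ij}$, etc., leading to $\gamma_i\to\sigma^{\cdot}\gamma_i$ and hence (\ref{eq:rescalebeta}). Your explicit check that $\tilde h\in\mathcal{S}$ and that the homogeneity in (\ref{eq:betabygamma}) makes each $\beta_k$ scale by a single power of $\sigma$ is a nice addition, but otherwise the argument matches the paper's proof.
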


\cite{Shimodaira:2008:TRN} introduced the \emph{normalized bootstrap
probability} defined by
\[
 \mathrm{NBP}_{\sigma^2}(H|y) = \Phi\bigl[ \sigma \Phi^{-1}(
 \mathrm{BP}_{\sigma^2}(H|y) ) \bigr]
\]
for $\sigma^2>0$, and considered an ``approximately unbiased'' $p$-value
defined formally by
\[
 \mathrm{AU}(H|y) = \mathrm{NBP}_{-1}(H|y).
\]
For extrapolating $\mathrm{NBP}_{\sigma^2}$ to $\sigma^2\le0$, we use
 the scaling-law of BP. It follows from Theorem~\ref{thm:bps} that the
 normalized bootstrap probability is expressed asymptotically as
\begin{equation} \label{eq:nbps}
 \mathrm{NBP}_{\sigma^2}(H|y) \simeq
\bar\Phi\Bigl[
 \beta_0 +  \beta_1 \sigma^2 + \beta_2 \sigma^4 
\Bigr]
\end{equation}
for $\sigma^2>0$, and it is extrapolated to $\sigma^2\le 0$ by the
right-hand side of (\ref{eq:nbps}). In particular for $\sigma^2=-1$, we
obtain the asymptotic expansion of AU as
\begin{equation} \label{eq:auexp}
\mathrm{AU}(H|y)  \simeq
\bar\Phi( \beta_0 -  \beta_1 + \beta_2 ).
\end{equation}
Comparing (\ref{eq:auexp}) with (\ref{eq:pvalexpbeta}), we find that $
\mathrm{AU}(H|y) = \mathrm{PV}(H|y) + \ordiii$, indicating AU is
third-order accurate in general. The remaining bias of order $\ordiii$
comes from the difference $\bar\Phi^{-1}(\mathrm{AU})-
\bar\Phi^{-1}(\mathrm{PV}) \simeq \tfrac{4}{3}\gamma_3$.

In complicated applications, we do not know the values of the
coefficients $\beta_0$, $\beta_1$, $\beta_2$, or they are just hardly
obtained through mathematical analysis. In the multiscale bootstrap of
\cite{Shimodaira:2008:TRN}, we estimate $\beta_0$, $\beta_1$, $\beta_2$
by fitting the right-hand side of (\ref{eq:bpsexp}) to observed values
of $\mathrm{BP}_{\sigma^2}(H|y)$ computed for several $\sigma^2>0$ values,
say, $\sigma^2_1,\ldots,\sigma^2_S$. This is equivalent to fitting
quadratic model $\beta_0+\beta_1 \sigma^2 + \beta_2 (\sigma^2)^2$ in
terms of $\sigma^2$ to observed values of $\sigma \bar\Phi^{-1}(
\mathrm{BP}_{\sigma^2}(H|y) )$.  Using the estimated values of the
coefficients, we can compute (\ref{eq:nbps}) for $\sigma^2 \le 0$.  In
the original form of multiscale bootstrap of \cite{Shimodaira:2002:AUT},
only two coefficients $\beta_0, \beta_1$ are estimated by linear model
$\beta_0+\beta_1 \sigma^2$, and $p$-value is computed as
$\mathrm{AU}=\bar\Phi( \beta_0 - \beta_1)$. The difference of the two AU
values is only $\ordiii$ and both the AU values are third-order
accurate.

The procedure is illustrated in Fig~\ref{fig2} for the numerical example of
$y=(0.71,1.63)$ with $h_0=0$, where the geometric quantities are
actually not defined at the vertex $\mu=(0,0)$.  We plotted $\sigma
\bar\Phi^{-1}( \mathrm{BP}_{\sigma^2}(H|y) )$ in a solid curve for $0.1
< \sigma^2 < 1.9$, instead of plotting the values for
$\sigma^2_1,\ldots,\sigma^2_S$.  We denote $\mathrm{AU}k$ when
extrapolation to $\sigma^2\le0$ is made by Taylor expansion with $k$
terms at $\sigma^2=1$.  This computes
$\mathrm{AU}2=\bar\Phi(1.69)=0.046$ by the linear model, and
$\mathrm{AU}3=\bar\Phi(1.53)=0.062$ by the quadratic model.
Interestingly, the procedure behaves similarly to the case of $h_0=0.1$,
and it seems working fine even when $h_0=0$ as will be seen also in
the simulation study of Section~\ref{sec:simulation}.

\begin{figure}[htb] % figure2
\includegraphics[width=10cm]{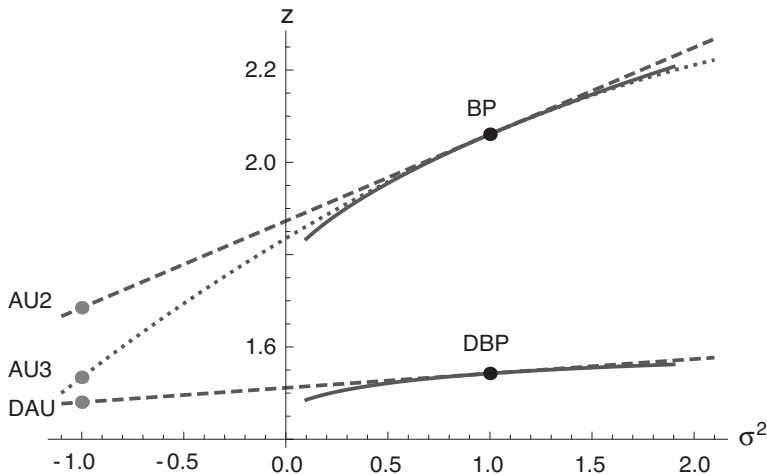}
\caption{Illustration of multiscale bootstrap and
  multiscale-double bootstrap for $y=(0.71,1.63)$. The boundary surface
  $\partial H$ is defined by (\ref{eq:huexample}) with $h_0=0$. Vertical
  axis indicates $z=\bar\Phi^{-1}(p)$ for several $p$-values. In
  multiscale bootstrap, $z=\sigma \bar\Phi^{-1}(
  \mathrm{BP}_{\sigma^2}(H|y) )$ is extrapolated to $\sigma^{2}=-1$ by
  linear model (dashed line) or quadratic model (dotted curve). In
  multiscale-double bootstrap, $z=\bar\Phi^{-1}(
  \mathrm{DBP}_{1,\sigma^2}(H|y) )$ is extrapolated to $\sigma^2=-1$ by
  linear model (dashed line).} \label{fig2}
\end{figure}

\subsection{Multiscale-double bootstrap}

The bias of BP can also be adjusted by the iterated bootstrap.  Instead
of (\ref{eq:yboot}), we generate many bootstrap replicates around $\hat
\mu(H|y)$ by
\[
 Y^{+} \sim N_{q+1}(\hat \mu(H|y), \tau^2 I_{q+1})
\]
for some $\tau^2>0$. The notation $Y^+$ is used to make the distinction
clear. For each generated value of $y^+$, we compute
$\mathrm{BP}_{\sigma^2}(H|y^+)$. This involves second-level bootstrap
and huge computation.  We calibrate $ \mathrm{BP}_{\sigma^2}(H|y)$ by
the distribution of $\mathrm{BP}_{\sigma^2}(H|Y^+)$. The double
bootstrap probability of $H$ for a given $y$ is defined as
\begin{equation} \label{eq:dbptausigma}
 \mathrm{DBP}_{\tau^2,\sigma^2}(H|y)
=P_{\tau^2} \Bigl[ \mathrm{BP}_{\sigma^2}(H|Y^{+})
 \le \mathrm{BP}_{\sigma^2}(H|y) \mid \hat\mu(H|y)   \Bigr].
\end{equation}
The variances are usually $\sigma^2=\tau^2=1$ and we simply denote DBP
or $\mathrm{DBP}(H|y)$ for $\mathrm{DBP}_{1,1}(H|y)$.
\cite{Efron:Tibshirani:1998:PR} called DBP as a calibrated confidence
level and mentioned that DBP is third-order accurate.

We will show later in Section~\ref{sec:asymptotic-dbp} that the double
 bootstrap probability for $\sigma^2>0$, $\tau^2=1$ is expressed
 asymptotically as
\begin{equation} \label{eq:dbpexps}
 \mathrm{DBP}_{1,\sigma^2}(H|y) \simeq
\bar\Phi\Bigl[ \beta_0  - \beta_1 - \beta_2 - \beta_3  \sigma^2 \Bigr],
\end{equation}
and it is extrapolated to $\sigma^2\le 0$ by the right-hand side.
Comparing (\ref{eq:dbpexps}) with (\ref{eq:pvalexpbeta}), we find that $
\mathrm{DBP}_{1,\sigma^2}(H|y) = \mathrm{PV}(H|y) + \ordiii$.  
In particular for $\sigma^2=1$, we confirm
that DBP is third-order accurate.

The remaining bias of order $\ordiii$ in DBP comes from the difference
\[
 \bar\Phi^{-1}(\mathrm{DBP}_{1,\sigma^2})- \bar\Phi^{-1}(\mathrm{PV})
\simeq -(1+\sigma^2)\beta_3,
\]
which vanishes when $\sigma^2=-1$. The bias-corrected DBP is 
defined formally by
\[
 \mathrm{DAU}(H|y) = \mathrm{DBP}_{1,-1}(H|y)
\]
so that DAU is forth order accurate.  Another advantage of DAU over DBP
is robustness against computational error of $\hat\mu(H|y)$ as mentioned
in Section~\ref{sec:asymptotic-dbp}.  The name of DAU may be understood
in the interpretation
\[
  \mathrm{DAU}(H|y)
\simeq P \Bigl[ \mathrm{AU}(H|Y^{+})
 \le \mathrm{AU}(H|y) \mid \hat\mu(H|y)   \Bigr],
\]
which immediately follows from (\ref{eq:dbptausigma}) by considering the
equivalence of contour surfaces of $\mathrm{BP}_{\sigma^2}(H|y)$ and
$\mathrm{NBP}_{\sigma^2}(H|y)$ as mentioned just before
Lemma~\ref{lem:additivity} in Section~\ref{sec:contourbp}.

Similarly to the computation of AU, we estimate the coefficients
$\beta_0-\beta_1-\beta_2$ and $\beta_3$ by fitting a linear model to
observed values of $ \bar\Phi^{-1}(\mathrm{DBP}_{1,\sigma^2})$. The
procedure is illustrated in Fig~\ref{fig2}.  We plotted
$\bar\Phi^{-1}(\mathrm{DBP}_{1,\sigma^2})$ in a solid curve for
$0.1<\sigma^2<1.9$ and extrapolation to $\sigma^2=-1$ is
made by Taylor expansion at $\sigma^2=1$.  $\mathrm{DAU} = \bar
\Phi(1.48)=0.069$ is slightly larger than $\mathrm{DBP} = \bar
\Phi(1.54)=0.061$ in this example.

\subsection{Simulation study} \label{sec:simulation}

Rejection probabilities (\ref{eq:rejprob-bp}), (\ref{eq:rejprob-dbp}),
and those for other approximate $p$-values are shown in
Table~\ref{tab2}. The region $H$ is the cone-shaped region mentioned in
Section~\ref{sec:lrtest}, where $h$ is specified by (\ref{eq:huexample})
with $h_0=0$.  Rejection probabilities are computed for several
$\mu=(u,-h(u))$ on $\partial H$.  These values are computed accurately
by numerical integration instead of Monte-Carlo simulation for avoiding
sampling error.  Looking at the table, we verify that MCB is unbiased at
$u=0$. However, the rejection probability of MCB is much smaller than
$\alpha$ for larger $u$.

All the bootstrap methods behave similarly in the sense that the bias is
large at $u=0$ and the bias decreases as $u$ becomes larger. BP has the
largest bias, and all the bias-corrected bootstrap probabilities have
smaller bias. In particular, AU3, DBP, and DAU have very small bias.
The difference between DBP and DAU is small, but DAU performs better
than DBP at all $u$ values.  Interestingly, the bias correction methods
work fine, even though $h(u)$ is not smooth at $u=0$.  Looking at
Table~\ref{tab1} again, we confirm that AU3, DBP, DAU values are close to
MCB for $y=(0.71, 1.63)$, agreeing with the simulation at $u=0$.

\begin{table}[htb] % table2
\caption{Rejection probabilities (in percent) at significance level
$\alpha=5\%$. }\label{tab2}
\begin{tabular}{lccccccc}
\hline
$u$ & 0.0 & 0.5 & 1.0 & 1.5 & 2.0 & 2.5 & 3.0 \\
\hline
BP  & 13.39 & 8.894 & 6.678 & 5.676 & 5.253 & 5.086 & 5.027 \\
AU2 & 7.655 & 5.171 & 4.459 & 4.447 & 4.628 & 4.801 & 4.912 \\
AU3 & 6.609 & 4.718 & 4.493 & 4.746 & 4.982 & 5.080 & 5.081 \\
DBP & 6.619 & 4.590 & 4.202 & 4.364 & 4.610 & 4.795 & 4.905 \\
DAU & 6.476 & 4.660 & 4.481 & 4.746 & 4.981 & 5.084 & 5.092 \\
MCB & 5.000 & 3.340 & 2.880 & 2.783 & 2.768 & 2.766 & 2.766 \\
\hline
\end{tabular}
\end{table}

\section{Geometry of smooth surfaces} \label{sec:geometry}

In this section, we discuss only geometry of smooth surfaces via simple
but tedious calculation without any probability argument. The results
will be used in Section~\ref{sec:asymptotic-analysis} for deriving
asymptotic accuracy of the bootstrap methods.  We work on the region
$H=\mathcal{R}(h)$ and boundary surface $\partial H=\mathcal{B}(h)$ for
$h\in \mathcal{S}$ expressed in the $(u,v)$ coordinates.

\subsection{Representing surfaces in local coordinates} 
\label{sec:localcoordinates}

We consider local coordinates $(\Delta u, \Delta v)$ with $\Delta
u=(\Delta u_1,\ldots, \Delta u_q)\in \mathbb{R}^q$ and $\Delta v\in
\mathbb{R}$ by taking the origin at $(u,-h(u))$. A point $(\Delta u,
\Delta v)$ is expressed in the $(u,v)$ coordinates as
\begin{equation} \label{eq:uvlocaluv}
 (u,-h(u)) + \Delta u_i\, b_i + \Delta v\, \|f\|^{-1} f
\end{equation}
using basis $\{b_1,\ldots, b_q, f\}$ in $\mathbb{R}^{q+1}$ defined as
follows. 

Here $\|f\|=\sqrt{f\cdot f}$ is the norm of $f\in \mathbb{R}^{q+1}$ with
the inner product $a\cdot b=\sum_{i=1}^{q+1}a_i b_i$ for two vectors
$a,b\in\mathbb{R}^{q+1}$.  We denote $\delta_i =
(\delta_{i1},\ldots,\delta_{iq}) \in \mathbb{R}^{q}$ with the
Kronecker delta $\delta_{ij}$, and $\nabla=(\partial/\partial
u_1,\ldots,\partial/\partial u_q)$.  Then
\[
b_i = \Bigl(\delta_i, -\frac{\partial h}{\partial u_i}\Bigr),
i=1,\ldots,q,
\]
are tangent to $\partial H$ at $(u,-h(u))$, and
the normal vector 
\[
 f=(\nabla h,1)
\]
satisfies $f \cdot b_i=0$, meaning that $f$ is orthogonal to $\partial
H$ at $(u,-h(u))$.  The vectors $b_i$ and $f$ should be denoted as
$b_i(u)$ and $f(u)$, but the dependence on $u$ is suppressed in the
notation.

\begin{lemm} \label{lem:hlocal}
For $h\in \mathcal{S}$,  the region $H=\mathcal{R}(h)$ is expressed in
the $(\Delta u,\Delta v)$ coordinates at $(u,-h(u))$ as
\[
 H = \{(\Delta u,
\Delta v) \mid \Delta v \le -\tilde
h(\Delta u), \Delta u \in \mathbb{R}^q  \}
\]
with $\tilde h\in\mathcal{S}$.  The coefficients are $\tilde h_0=\tilde
h_i=0$, $\tilde h_{ij} = h_{ij} + 3 h_{ijk} u_k + (6 h_{ijkl} -2 h_{ij}
h_{mk} h_{ml}) u_k u_l$, $ \tilde h_{ijk} = h_{ijk} + 4 h_{ijkl} u_l -
\tfrac{4}{3} (h_{ij} h_{km} h_{ml} + h_{ik} h_{jm} h_{ml} + h_{jk}
h_{im} h_{ml} ) u_l $, $\tilde h_{ijkl} = h_{ijkl}$.
 
\end{lemm}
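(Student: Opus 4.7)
The plan is to derive an implicit equation for $\tilde h(\Delta u)$ from the parametrization (\ref{eq:uvlocaluv}) and then solve it by iterated Taylor expansion, retaining all corrections up to $\ordiii$. Substituting $(\Delta u,-\tilde h(\Delta u))$ into (\ref{eq:uvlocaluv}) and splitting into $u$- and $v$-components gives
\[
u' = u + \Delta u - \|f\|^{-1}\tilde h\,\nabla h(u),\qquad
v' = -h(u) - \Delta u\cdot\nabla h(u) - \|f\|^{-1}\tilde h.
\]
Imposing $v' = -h(u')$, cancelling the common terms $h(u) + \Delta u\cdot\nabla h(u)$, and using $\nabla h(u)\cdot\nabla h(u) = \|f\|^2 - 1$ to combine the $\tilde h$ pieces yields
\[
\tilde h\,\|f\| = h\bigl(u + \Delta u - \|f\|^{-1}\tilde h\,\nabla h(u)\bigr) - h(u) - \Delta u\cdot\nabla h(u).
\]
Setting $\Delta u = 0$ gives $\tilde h_0 = 0$, and differentiating once at the origin gives $\tilde h_i = 0$; these reflect that $(b_1,\ldots,b_q)$ was constructed as a tangent frame.

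Next I would Taylor-expand the right-hand side around $u$ using the multinomial derivatives
\[
H^{(u)}_{ij} = h_{ij} + 3 h_{ijk}u_k + 6 h_{ijkl}u_k u_l,\quad
H^{(u)}_{ijk} = h_{ijk} + 4 h_{ijkl}u_l,\quad
H^{(u)}_{ijkl} = h_{ijkl},
\]
together with the scalar expansions $\|f\|^{-1} = 1 - \tfrac12|\nabla h(u)|^2 + \ordiiii$ and $|\nabla h(u)|^2 \simeq 4\,h_{mk}h_{ml}u_k u_l$. The leading balance is $\tilde h \simeq H^{(u)}_{ij}\Delta u_i\Delta u_j$, of order $\ordi$. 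Substituting this back into the nonlinearities produces three corrections at $\ordiii$: (i) the factor $\|f\|^{-1}$ acting on the leading quadratic term gives $-\tfrac12 H^{(u)}_{ij}|\nabla h(u)|^2\Delta u_i\Delta u_j \simeq -2\,h_{ij}h_{mk}h_{ml}u_k u_l\,\Delta u_i\Delta u_j$, accounting for the extra term in $\tilde h_{ij}$; (ii) the cross-term $-2 H^{(u)}_{ij}\,\Delta u_i\,\tilde h\,H^{(u)}_j/\|f\|$ becomes cubic in $\Delta u$ after inserting the leading $\tilde h$ and the leading $H^{(u)}_j \simeq 2 h_{jn}u_n$, yielding the raw expression $-4\,h_{ij}h_{jn}h_{km}u_n\,\Delta u_i\Delta u_k\Delta u_m$, whose symmetrization over the three free positions produces the three-term $-\tfrac43$-sum appearing in $\tilde h_{ijk}$; (iii) the quartic Taylor term contributes $\tilde h_{ijkl} = h_{ijkl}$ unchanged. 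Reading off the coefficients of $\Delta u_i\Delta u_j$, $\Delta u_i\Delta u_j\Delta u_k$, and $\Delta u_i\Delta u_j\Delta u_k\Delta u_l$ then gives all the formulas stated in the lemma.

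The main obstacle is the bookkeeping of orders under the class-$\mathcal{S}$ scaling: one must verify case by case that each candidate cross-term is either $\ordiii$ (hence kept) or $\ordiiii$ (hence dropped). For example, $\|f\|^{-1}$ acting on $H^{(u)}_{ijk}\Delta u_i\Delta u_j\Delta u_k$ is a priori a cubic correction but evaluates to $\ordiiii$ and does not contribute to $\tilde h_{ijk}$; the further cross-terms $H^{(u)}_{ij}\tilde h^2 H^{(u)}_i H^{(u)}_j/\|f\|^2$, $-3 H^{(u)}_{ijk}\Delta u_i\Delta u_j\tilde h H^{(u)}_k/\|f\|$, and $-4 H^{(u)}_{ijkl}\Delta u_i\Delta u_j\Delta u_k\tilde h H^{(u)}_l/\|f\|$ are all $\ordiiii$. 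The symmetrization step in (ii), which converts the raw factor $-4$ into $-\tfrac43$ together with three permutation summands, is the part that is easiest to miscount.
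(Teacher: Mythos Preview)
Your approach is essentially the paper's: set up the implicit relation between $\tilde h$ and $h$ from the local parametrization and solve it by Taylor expansion to $\ordiii$. The only organizational difference is that the paper expands $h(u+\Delta\tilde u)$ about $u+\Delta u$, which yields the closed product form $\tilde h\simeq\|f\|\,A\,B$ with $A=(1+\nabla h|_{u+\Delta u}\cdot\nabla h|_u)^{-1}$ and $B=h(u+\Delta u)-h(u)-\Delta u\cdot\nabla h|_u$, whereas you expand about $u$ and iterate. The raw cubic terms look different before symmetrization (the paper's $-4h_{ij}h_{mk}h_{ml}u_l$ versus your $-4h_{ij}h_{jn}h_{km}u_n$), but they symmetrize to the same $-\tfrac43$ sum, and the quadratic and quartic coefficients coincide directly.

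One slip to fix: your displayed implicit equation is not quite what your verbal derivation produces. After moving the linear-in-$\tilde h$ piece across via $|\nabla h|^2=\|f\|^2-1$, the left side does become $\tilde h\,\|f\|$, but the right side becomes the \emph{second-order remainder} $h(u')-h(u)-(u'-u)\cdot\nabla h(u)$, not $h(u')-h(u)-\Delta u\cdot\nabla h(u)$. These differ by $\|f\|^{-1}\tilde h\,|\nabla h|^2=\ordiii$, so taken literally your displayed equation would give the wrong constant ($-6$ instead of $-2$) in $\tilde h_{ij}$. Your steps (i)--(iii), however, are computed exactly as if from the correct equation $\tilde h\,\|f\|=R$ with $R$ the quadratic-and-higher remainder in $(u'-u)$, so the final coefficients are right; just amend the displayed formula.
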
 

\subsection{Expressions of the four geometric quantities} 
\label{sec:geometricquantities}

We consider an orthonormal basis $\{c_1,\ldots, c_q, \|f\|^{-1} f\}$ for
the local coordinates at $(u,-h(u))$, where $\{c_1,\ldots,c_q\}$ is an
arbitrary orthonormal basis of the tangent space; $c_i \cdot c_j =
\delta_{ij}$ and $c_i \cdot f = 0$.  The dependence of these vectors on
$u$ is suppressed in the notation again.  A point $(x,\Delta v)$ with
$x=(x_1,\ldots,x_q)\in\mathbb{R}^q$ and $\Delta v\in\mathbb{R}$
corresponds to
\[
 (u,-h(u)) + \Delta u_i\, c_i + \Delta v\, \|f\|^{-1} f
\]
in the $(u,v)$ coordinates.

In the $(x,\Delta v)$ coordinates, $\partial H$ is expressed as $\Delta
v = -d(x)$ with
\[
 d(x) \simeq d_{ij} x_i x_j  +  d_{ijk} x_i x_j x_k + d_{ijkl} x_i x_j
x_k x_l.
\]
Then we apply the definitions of $\gamma_i$ in (\ref{eq:gammahij}) to
$d(x)$ as follows.
\[
\begin{split}
 \gamma_1(h,u) &= d_{ii}=\Tr(D),\quad
 \gamma_2(h,u) = d_{ij} d_{ij}=\Tr(D^2),\\
 \gamma_3(h,u) &= d_{ij} d_{jk} d_{ki}=\Tr(D^3),\quad
 \gamma_4(h,u) = d_{iijj},
\end{split} 
\]
where $D$ is $q \times q $ matrix with elements $(D)_{ij}=d_{ij}$.  The
four geometric quantities are invariant to the choice of orthonormal
basis as will be seen in (\ref{eq:uvgamma}) below. 

\begin{lemm} \label{lem:glocal}
For $h\in \mathcal{S}$, we consider the local coordinates $(\Delta u,
\Delta v)$ at $(u,-h(u))$ using the basis $\{b_1,\ldots,b_q, 
f\}$.  Let $G$ be $q \times q$ matrix with elements $(G)_{ij} = g_{ij} =
b_i \cdot b_j$ for $i,j=1,\ldots,q$, and $g^{ij} = (G^{-1})_{ij}$ be the
elements of the inverse matrix of $G$.  Then the four geometric
quantities are expressed as
\begin{equation} \label{eq:uvgamma}
 \begin{split}
  \gamma_1(h,u) &= \tilde h_{ij} g^{ij}=\Tr(\tilde D G^{-1}),\quad
  \gamma_2(h,u) = \tilde h_{ij} g^{jk} \tilde h_{kl} g^{li}
=\Tr((\tilde D G^{-1})^2),\\
  \gamma_3(h,u) &= \tilde h_{ij} g^{jk} \tilde h_{kl} g^{lm} \tilde
  h_{mn} g^{ni}=\Tr((\tilde D G^{-1})^3),\quad
  \gamma_4(h,u) = \tilde h_{ijkl} g^{ij} g^{kl}
 \end{split}
\end{equation}
using the coefficients $\tilde h_{ij}$ and $\tilde h_{ijkl}$ defined in
Lemma~\ref{lem:hlocal} and $q \times q$ matrix $\tilde D$ with elements
$(\tilde D)_{ij}=\tilde h_{ij}$.  They are expressed asymptotically as
\begin{equation} \label{eq:hijgamma}
 \begin{split}
 \gamma_1(h,u) & \simeq h_{ii} + 3 h_{iik} u_k + (6 h_{iikl} -2 h_{ii}
 h_{mk} h_{ml} -4 h_{ij} h_{ik} h_{jl}) u_k u_l,\\
 \gamma_2(h,u) &\simeq h_{ij} h_{ij} + 6 h_{ij} h_{ijk} u_k,\quad
 \gamma_3(h,u)\simeq h_{ij} h_{jk} h_{ki},\quad
\gamma_4(u,h) \simeq h_{iijj}
 \end{split}
\end{equation}
using the coefficients of $h(u)$.  In particular,
$\gamma_i=\gamma_i(h,0)$, $i=1,\ldots,4$, are consistent with their
 definitions in (\ref{eq:gammahij}). Also,
\[
\frac{1}{2} \frac{\partial^2 \gamma_1(h,u)}{\partial u_i \partial
 u_j}\Bigr|_0 \simeq
6 h_{mmij} -2 h_{mm} h_{li} h_{lj} -4 h_{ml} h_{mi} h_{lj}
\]
confirms that the definition of $\beta_3$ in (\ref{eq:betabygamma}) is
consistent with (\ref{eq:curvature-of-curvature}).
\end{lemm}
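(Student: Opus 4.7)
The plan is to introduce a linear change of basis between the non-orthonormal $\{b_1,\ldots,b_q\}$ used in Lemma~\ref{lem:hlocal} and the orthonormal $\{c_1,\ldots,c_q\}$ used to define $d(x)$. Both span the tangent space to $\partial H$ at $(u,-h(u))$, so there is a unique invertible $q\times q$ matrix $A$ with $b_i=A_{ij}c_j$; consequently $g_{ij}=b_i\cdot b_j=(AA^T)_{ij}$, i.e.\ $G=AA^T$, and a point expressed as $\Delta u_i b_i$ has orthonormal coordinates $x=A^T\Delta u$. The identity $\tilde h(\Delta u)=d(A^T\Delta u)$ compared order by order gives $\tilde D=A D A^T$ (so $D=A^{-1}\tilde D A^{-T}$) and, for the quartic part, $\tilde h_{ijkl}=A_{im}A_{jn}A_{kp}A_{lq}d_{mnpq}$ after symmetrization. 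Cyclic invariance of trace then yields $\Tr(D^k)=\Tr((A^{-1}\tilde D A^{-T})^k)=\Tr((\tilde D G^{-1})^k)$ for $k=1,2,3$, and the identity $A^T G^{-1} A=I_q$ (valid because $A$ is square and invertible) gives $\tilde h_{ijkl}g^{ij}g^{kl}=d_{mnpq}\delta_{mn}\delta_{pq}=d_{mmpp}=\gamma_4$; together these establish (\ref{eq:uvgamma}).

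Next I would expand $G^{-1}$ asymptotically. Writing $v_i=\partial h/\partial u_i=h_i+2 h_{ij}u_j+3 h_{ijk}u_j u_k+\cdots$, the class-$\mathcal{S}$ orders give $v_i=O(n^{-1/2})$, hence $G=I_q+vv^T$ and $G^{-1}=I_q-vv^T+O(n^{-2})$. Substituting into (\ref{eq:uvgamma}) together with the $\tilde h_{ij},\tilde h_{ijk},\tilde h_{ijkl}$ from Lemma~\ref{lem:hlocal}, I retain only contributions of order $\ordiii$ or larger. For $\gamma_1=\tilde h_{ij}g^{ij}\simeq\tilde h_{ii}-\tilde h_{ij}v_i v_j$, only the leading pieces $v_i\approx 2h_{ik}u_k$ and $\tilde h_{ij}\approx h_{ij}$ survive in the correction, producing $-4 h_{ij}h_{ik}h_{jl}u_k u_l$, which combined with $\tilde h_{ii}$ from Lemma~\ref{lem:hlocal} reproduces the stated $\gamma_1(h,u)$. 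For $\gamma_2=\Tr((\tilde D G^{-1})^2)\simeq\Tr(\tilde D^2)-2 v^T\tilde D^2 v$ the correction is already $O(n^{-2})$ since $v=O(n^{-1/2})$ and $\tilde D^2=O(n^{-1})$, so only $\Tr(\tilde D^2)\simeq h_{ij}h_{ij}+6 h_{ij}h_{ijk}u_k$ survives. For $\gamma_3$ and $\gamma_4$, every $G^{-1}$ correction and every non-leading piece of the $\tilde h$ coefficients lies beyond $\ordiii$, leaving $h_{ij}h_{jk}h_{ki}$ and $h_{iijj}$ respectively.

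The final assertion about $\beta_3$ is a direct differentiation: since the quadratic-in-$u$ coefficient of $\gamma_1(h,u)$ in (\ref{eq:hijgamma}) is symmetric in $(k,l)$, applying $\tfrac{1}{2}\partial^2/\partial u_i\partial u_j|_0$ picks out $6 h_{mmij}-2 h_{mm}h_{pi}h_{pj}-4 h_{mn}h_{mi}h_{nj}$; contracting $i=j$ and summing yields $6 h_{mmii}-2 h_{mm}h_{pi}h_{pi}-4 h_{mn}h_{mi}h_{ni}=6\gamma_4-2\gamma_1\gamma_2-4\gamma_3$, matching the definition of $\beta_3$ in (\ref{eq:betabygamma}).

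The main obstacle I anticipate is the order-counting in the second paragraph: both $\tilde h_{ij},\tilde h_{ijk}$ (from Lemma~\ref{lem:hlocal}) and $v_i=\partial h/\partial u_i$ contain pieces of several different orders in $n$, so the product $\tilde h_{ij}v_i v_j$ generates many cross terms that must each be verified to fall beyond $\ordiii$. In particular, the $h_i=\ordii$ contribution to $v_i$ looks of relevant size but always pairs with a small factor and drops out of the final formula. Once the surviving terms are identified the remaining algebra is routine.
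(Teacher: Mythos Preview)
Your proposal is correct and follows essentially the same route as the paper: a linear change of basis in the tangent space (your matrix $A$ plays the role of the paper's $B^{-1}C$, with $G=AA^T$ matching $G=B^TB$) to reduce the trace formulas (\ref{eq:uvgamma}) to cyclic invariance, followed by the expansion $g^{ij}=\delta_{ij}-v_iv_j+\ordiiii$ and the order-counting you describe to obtain (\ref{eq:hijgamma}). The only cosmetic difference is that you keep the full $vv^T$ and bound the remainder by $\ordiiii$, whereas the paper truncates $v_iv_j$ to its leading piece $4h_{ik}h_{jl}u_ku_l$ with remainder $\ordiii$ before multiplying; both lead to the same surviving terms.
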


\subsection{Shifting surfaces} \label{sec:twosurfaces}

We consider shifting $\mathcal{B}(h)$ toward the normal direction.  Let
$f(u)$ be the normal vector at $(u,-h(u))\in \mathcal{B}(h)$. For a
specified $\lambda\in\mathcal{S}$, we move the point $(u,-h(u))$ by
$\lambda(u)$ toward the normal direction.  This is expressed as
\begin{equation} \label{eq:sdefinition}
(\theta,-s(\theta)) = (u,-h(u)) + \lambda(u) \|f(u)\|^{-1} f(u),
\end{equation}
where $s(u)$ is some function of $u\in\mathbb{R}^q$, and
$\theta\in\mathbb{R}^q$ is used when distinction is needed.  We can
interpret (\ref{eq:sdefinition}) as
\[
 \hat\mu(H |(\theta,-s(\theta)) ) = (u,-h(u))
\]
with signed distance $\lambda(u)$.  For sufficiently large $n$, such
$s(\theta)$ is uniquely defined for each $\theta$, because all the
surfaces approach flat as $n\to\infty$.  We denote
(\ref{eq:sdefinition}) as
\[
 s=\mathcal{M}(h,\lambda).
\]

\begin{lemm}\label{lem:twosurfaces}
Let $s=\mathcal{M}(h,\lambda)$ for $h\in \mathcal{S}$, $\lambda \in \mathcal{S}$.  If $\lambda(u)$
is expressed as
\[
 \lambda(u) \simeq \lambda_0 + \lambda_i u_i + \lambda_{ij} u_i u_j
\]
with $\lambda_0=\ordz$, $\lambda_i=\ordii$, $\lambda_{ij}=\ordiii$, then
we have $s\in \mathcal{S}$ with coefficients $ s_0=h_0-\lambda_0 =
\ordz$, $ s_i = h_i-\lambda_i - 2\lambda_0 h_{mi} (h_m - \lambda_m) =
\ordii$, $ s_{ij} = h_{ij}-\lambda_{ij} - 2\lambda_0 h_{mi} h_{mj} +
4\lambda_0^2 h_{ml} h_{mi} h_{lj} = \ordi$, $s_{ijk} = h_{ijk} -
2\lambda_0 (h_{mi} h_{mjk}+h_{mj} h_{mik}+h_{mk} h_{mij} ) = \ordii$, $
s_{ijkl} = h_{ijkl} =\ordiii$.  The four geometric quantities at
$(0,-s(0))$ are $ \gamma_1(s,0) = s_{ii} \simeq \gamma_1 -
\lambda_{ii} -2\lambda_0 \gamma_2 + 4\lambda_0^2 \gamma_3$,
$\gamma_2(s,0) = s_{ij} s_{ij} \simeq \gamma_2 - 4\lambda_0
\gamma_3$, $\gamma_3(s,0) = s_{ij} s_{jk} s_{ki} \simeq \gamma_3$,
$\gamma_4(s,0) = s_{iijj} \simeq \gamma_4$, where
$\gamma_i=\gamma_i(h,0)$, $i=1,\ldots,4$.
\end{lemm}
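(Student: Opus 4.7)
The plan is to derive the Taylor coefficients of $s$ by inverting the defining relation (\ref{eq:sdefinition}) order by order in $n^{-1/2}$, and then read off $\gamma_i(s,0)$ by plugging those coefficients into the formulas (\ref{eq:hijgamma}) from Lemma~\ref{lem:glocal}. Writing (\ref{eq:sdefinition}) componentwise gives
\[
\theta_i = u_i + \lambda(u)\,\|f(u)\|^{-1}\frac{\partial h}{\partial u_i}(u),\qquad
s(\theta) = h(u) - \lambda(u)\,\|f(u)\|^{-1},
\]
with $\|f(u)\|^{-1} \simeq 1 - \tfrac12\|\nabla h(u)\|^2 + \tfrac38\|\nabla h(u)\|^4$. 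Because $h\in\mathcal{S}$, the gradient $\nabla h(u)$ is $\ordi$ at bounded $u$, so the perturbation $\lambda(u)\|f\|^{-1}\nabla h(u)$ added to $u$ is also $\ordi$, and $\|f\|^{-1}$ differs from $1$ only by $\ordii$.

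Next I would invert $\theta = \theta(u)$. Since the perturbation is $\ordi$, a single Picard/Newton step $u \simeq \theta - \lambda(\theta)\|f(\theta)\|^{-1}\nabla h(\theta)$ followed by one re-substitution is enough to reach $\ordiii$ accuracy. Substituting the resulting expansion for $u(\theta)$ into $s(\theta) = h(u) - \lambda(u)\|f(u)\|^{-1}$ and Taylor-expanding around $\theta=0$, I would read off the coefficients claimed for $s_0,s_i,s_{ij},s_{ijk},s_{ijkl}$. The order assignments $s_0=\ordz,\,s_i=\ordii,\,s_{ij}=\ordi,\,s_{ijk}=\ordii,\,s_{ijkl}=\ordiii$ are then immediate from those of $h$ and $\lambda$ and confirm $s\in\mathcal{S}$. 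The characteristic terms $-2\lambda_0 h_{mi}h_{mj}$ and $+4\lambda_0^2 h_{ml}h_{mi}h_{lj}$ in $s_{ij}$ come respectively from the first- and second-order expansion of $\|f\|^{-1}$ combined with $\partial^2 h/\partial u_i\partial u_j$; the analogous correction in $s_{ijk}$ comes from expanding $\|f\|^{-1}\partial h/\partial u_i$ to one more order.

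With the coefficients of $s$ in hand, the four geometric quantities are obtained by substituting into (\ref{eq:hijgamma}) at $u=0$: $\gamma_1(s,0)\simeq s_{ii}$, $\gamma_2(s,0)\simeq s_{ij}s_{ij}$, $\gamma_3(s,0)\simeq s_{ij}s_{jk}s_{ki}$, $\gamma_4(s,0)\simeq s_{iijj}$. Collecting terms, the cubic correction in $s_{ij}$ contributes $-4\lambda_0\gamma_3$ to $\gamma_2(s,0)$ and $4\lambda_0^2\gamma_3$ to $\gamma_1(s,0)$, while the $-2\lambda_0 h_{mi}h_{mj}$ piece produces $-2\lambda_0\gamma_2$ in $\gamma_1(s,0)$. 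Because every new $s$-correction is at least $\ordi$ times an $h$-derivative, the corrections to $\gamma_3$ and $\gamma_4$ are all $\ordiiii$ and hence absorbed in $\simeq$, which explains why $\gamma_3(s,0)\simeq\gamma_3$ and $\gamma_4(s,0)\simeq\gamma_4$.

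The main obstacle is the bookkeeping: $\lambda_0=\ordz$ never improves the order of a term, so products of $\lambda_0$ with $h$-derivatives and with $\lambda$-derivatives proliferate and must all be tracked to $\ordiii$. The trickiest step is the inversion $\theta\mapsto u$ through the quadratic and cubic terms, since each iteration feeds back new $\lambda_0\cdot h_{ij}$ contributions into the tangential shift that affect $s_{ij}$ and $s_{ijk}$ at the leading order displayed; once the inversion is handled, the rest is purely algebraic substitution into Lemma~\ref{lem:glocal}.
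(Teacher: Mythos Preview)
Your plan matches the paper's proof almost exactly: write (\ref{eq:sdefinition}) componentwise, expand $\|f\|^{-1}$, invert $\theta\mapsto u$ to the required order, substitute into $h(u)-\lambda(u)\|f\|^{-1}$, and read off the coefficients; the geometric quantities then follow from (\ref{eq:hijgamma}).

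One point of caution: your heuristic for the origin of the $s_{ij}$ corrections is off. Since $\|\nabla h\|^2=\ordii$, the ``second-order'' term $\tfrac38\|\nabla h\|^4$ in your expansion of $\|f\|^{-1}$ is $\ordiiii$ and contributes nothing at the level tracked here. The $-2\lambda_0 h_{mi}h_{mj}$ and $+4\lambda_0^2 h_{ml}h_{mi}h_{lj}$ pieces in $s_{ij}$ come almost entirely from the change of variables $u\mapsto\theta$: the tangential shift $\theta_i-u_i\simeq\lambda_0\,\partial h/\partial u_i$ is $\ordi$, and its first and second iterates feed back into the quadratic part of $h$ to produce exactly these terms (the $\|f\|^{-1}$ correction contributes a $+2\lambda_0 h_{mi}h_{mj}$ to the intermediate function $a(u)=h(u)-\lambda(u)\|f\|^{-1}$, which is then overcompensated by the change of variables). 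If you rely on your stated heuristic you will miss the $\lambda_0^2$ term entirely, so carry out the inversion explicitly rather than guessing where pieces originate.
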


\section{Asymptotic analysis of bootstrap methods}
\label{sec:asymptotic-analysis}

We are going to show the asymptotic expansions of PV and DBP, and then
prove the asymptotic accuracy of the bootstrap methods. The argument is
based on the geometric tools developed in Section~\ref{sec:geometry} as
well as another tool to be developed below.

\subsection{Contour surfaces of bootstrap probability}
\label{sec:contourbp}

We consider a surface on which the bootstrap probability remains
constant.  For $H=\mathcal{R}(h)$ with $h\in\mathcal{S}$, we consider a
function $s(u)$ of $u\in \mathbb{R}^q$ satisfying
\[
 \mathrm{BP}_{\sigma^2}(H | (u,-s(u)) ) = 
1-\alpha,\quad u\in\mathbb{R}^q,
\]
meaning $ \mathrm{BP}_{\sigma^2}(H | y ) =1-\alpha$ is constant for any
$y\in\mathcal{B}(s)$.  Then, $\mathcal{B}(s)$, as well as $s$ itself,
will be called as the \emph{contour surface of the bootstrap
probability} of $H$ with variance $\sigma^2>0$ at level $1-\alpha$.  In
particular, we choose $\alpha$ so that $(0,\lambda_0-h_0)\in
\mathcal{B}(s)$ for a specified $\lambda_0\in \mathbb{R}$.  We denote
this contour surface as
\[
 s = \mathcal{L}_{\sigma^2}(h,\lambda_0).
\]

\begin{lemm} \label{lem:contourbp}
Let $ s = \mathcal{L}_{\sigma^2}(h,\lambda_0)$ for $h\in\mathcal{S}$,
 $\lambda_0\in \mathbb{R}$, and $\sigma^2>0$. Then, 
$s$ is expressed as
 $s=\mathcal{M}(h,\lambda) $ by specifying $\lambda(u) \simeq \lambda_0
 + \lambda_i u_i + \lambda_{ij} u_i u_j$ with $\lambda_0=\ordz$,
\begin{equation}
\lambda_i =  \sigma^2 ( -3 h_{mmi} + 6\lambda_0 h_{ml}
  h_{mli}),\quad  \lambda_{ij}=
\sigma^2 (-6
 h_{mmij} +2 h_{mm} h_{li} h_{lj} + 4 h_{ml} h_{mi} h_{lj}). 
\label{eq:lambdaucontour}
\end{equation}
We have $s\in\mathcal{S}$
with coefficients
\begin{equation}\label{eq:contourcoef}
\begin{split}
 s_0 &= h_0 - \lambda_0,\quad
 s_i = h_i - 2\lambda_0  h_m h_{mi}+
 \sigma^2( 3h_{mmi}-6 \lambda_0 h_{ml} h_{mli}   - 6\lambda_0
 h_{mi}h_{mll} ),\\
 s_{ij} &= h_{ij} - 2\lambda_0 h_{mi} h_{mj} + 4\lambda_0^2 
h_{ml}h_{mi} h_{lj} 
+\sigma^2(6h_{ijmm} -2 h_{mm}h_{li}h_{lj} -4 h_{ml}h_{mi} h_{lj}),\\
 s_{ijk} &= h_{ijk} - 2\lambda_0 (h_{mi} h_{mjk}+h_{mj} h_{mik}+h_{mk}
 h_{mij} ),\quad
s_{ijkl} = h_{ijkl}.
\end{split} 
\end{equation}
The four geometric quantities of $s$ at $(0,-s(0))$ are
\begin{equation}\label{eq:contourgamma}
 \begin{split}
 \gamma_1(s,0) &\simeq\gamma_1 - 2\lambda_0 \gamma_2 + 4\lambda_0^2 \gamma_3
+ \sigma^2(6 \gamma_4 - 2\gamma_1 \gamma_2 - 4\gamma_3),\\
\gamma_2(s,0) &\simeq \gamma_2 - 4\lambda_0 \gamma_3,\quad
\gamma_3(s,0)\simeq \gamma_3,\quad
\gamma_4(s,0)\simeq \gamma_4,
 \end{split}
\end{equation}
where $\gamma_i=\gamma_i(h,0)$, $i=1,\ldots,4$.
\end{lemm}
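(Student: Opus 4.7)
The plan is to identify the contour by combining Theorem~\ref{thm:bps} at a moving basepoint with Lemma~\ref{lem:twosurfaces}. Since $\partial H$ flattens as $n\to\infty$, every point in a neighborhood of $\mathcal{B}(h)$ sits on a unique normal line to $\mathcal{B}(h)$, so the contour may be written as $s=\mathcal{M}(h,\lambda)$ for an unknown signed-distance function $\lambda(\theta)$, to be fixed by the constancy of $\mathrm{BP}_{\sigma^2}$.

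First I would apply Theorem~\ref{thm:bps} at the foot of perpendicular $(\theta,-h(\theta))\in\mathcal{B}(h)$. Translating the Taylor coefficients of $h$ at the origin into local coefficients there via Lemma~\ref{lem:hlocal} and reading off $\gamma_k(h,\theta)$ through Lemma~\ref{lem:glocal}, the bootstrap probability at the shifted point $(\theta,-h(\theta))+\lambda(\theta)\|f(\theta)\|^{-1}f(\theta)$ is asymptotically $\bar\Phi[\lambda(\theta)\sigma^{-1}+\beta_1(\theta)\sigma+\beta_2(\theta)\sigma^3]$, where $\beta_1(\theta)=\gamma_1(h,\theta)-\lambda(\theta)\gamma_2(h,\theta)+\tfrac{4}{3}\lambda(\theta)^2\gamma_3(h,\theta)$ and $\beta_2(\theta)$ is the analogous combination from (\ref{eq:betabygamma}). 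Setting this equal to its value at $\theta=0$ yields the implicit contour equation
\[
\lambda(\theta)-\lambda_0=\sigma^2\bigl[\beta_1(0)-\beta_1(\theta)\bigr]+\sigma^4\bigl[\beta_2(0)-\beta_2(\theta)\bigr].
\]

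Next, I would expand the right-hand side using Lemma~\ref{lem:glocal}: the linear (in $\theta_k$) and quadratic (in $\theta_k\theta_l$) coefficients of $\gamma_1(h,\theta)-\gamma_1(h,0)$ are $3h_{iik}$ and $6h_{iikl}-2h_{ii}h_{mk}h_{ml}-4h_{ij}h_{ik}h_{jl}$; the linear coefficient of $\gamma_2(h,\theta)-\gamma_2(h,0)$ is $6h_{ij}h_{ijk}$; and $\gamma_3,\gamma_4$ vary with $\theta$ only at orders below $\ordiii$. An order count then shows that the $\sigma^4$-term is $\ordiiii$ and that the $\lambda_i\theta_i$ piece inside $-\lambda(\theta)\gamma_2(h,\theta)$ is also $\ordiiii$, so the implicit equation collapses to an explicit one in a single substitution. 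Matching coefficients in the ansatz $\lambda(\theta)\simeq\lambda_0+\lambda_i\theta_i+\lambda_{ij}\theta_i\theta_j$ yields formulas (\ref{eq:lambdaucontour}) directly.

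Finally, substituting these $\lambda_0,\lambda_i,\lambda_{ij}$ into Lemma~\ref{lem:twosurfaces} gives (\ref{eq:contourcoef}); in particular, the $-6\sigma^2\lambda_0 h_{mi}h_{mll}$ contribution to $s_i$ comes from the $+2\lambda_0 h_{mi}\lambda_m$ cross-term after applying the symmetry $h_{llm}=h_{mll}$. The four geometric quantities (\ref{eq:contourgamma}) then follow from the last clause of Lemma~\ref{lem:twosurfaces} once one evaluates $\lambda_{ii}=\sigma^2(-6\gamma_4+2\gamma_1\gamma_2+4\gamma_3)$ via the identifications $h_{mmii}=\gamma_4$, $h_{mm}h_{li}h_{li}=\gamma_1\gamma_2$, and $h_{ml}h_{mi}h_{li}=\gamma_3$. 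The main technical care is in the second step, where the implicit dependence of $\beta_1(\theta)$ on $\lambda(\theta)$ must be shown to resolve in a single pass with only $\ordiiii$ residual; everything thereafter is routine symbolic manipulation using formulas already assembled in Lemmas~\ref{lem:hlocal}--\ref{lem:twosurfaces}.
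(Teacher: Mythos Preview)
Your proposal is correct and follows essentially the same route as the paper: write the contour as $s=\mathcal{M}(h,\lambda)$, apply Theorem~\ref{thm:bps} at the moving foot of perpendicular $(u,-h(u))$ with geometric quantities $\gamma_i(h,u)$ read off from Lemma~\ref{lem:glocal}, solve for $\lambda(u)$, and then feed the result into Lemma~\ref{lem:twosurfaces}. The only cosmetic difference is that the paper keeps $z_\alpha$ as an auxiliary parameter (solving $z_\alpha\simeq\sigma^{-1}\lambda(u)+\sigma\beta_1(u)+\sigma^3\beta_2(u)$ for $\lambda(u)$ and then eliminating $z_\alpha$ via $z_\alpha=\sigma^{-1}\lambda_0+O(n^{-1/2})$), whereas you subtract the $\theta$- and $0$-equations to obtain $\lambda(\theta)-\lambda_0$ directly; both routes are equivalent and the remaining computations coincide.
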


We denote the $\lambda(u)$ of (\ref{eq:lambdaucontour}) as
$\lambda_{\sigma^2}(u) = \lambda_0 - \sigma^2 \kappa(u)$
with
\begin{align}
 \kappa(u) &= 
\gamma_1(h,u)-\gamma_1(h,0) - \lambda_0 (\gamma_2(h,u)-\gamma_2(h,0) )
\notag\\
&\simeq ( 3 h_{mmi} - 6\lambda_0 h_{ml} h_{mli}) u_i + (6
 h_{mmij} - 2 h_{mm} h_{li} h_{lj} - 4 h_{ml} h_{mi} h_{lj}) 
u_i u_j.\label{eq:kappa}
\end{align}
This also relates to (\ref{eq:curvature-of-curvature}) as
$(1/2)\partial^2 \kappa(u)/\partial u_i \partial u_i|_0 = \beta_3$ or
$(1/2)\partial^2 \lambda_{\sigma^2}(u)/\partial u_i \partial u_i|_0 = -
\sigma^2\beta_3$.  The contour surface of $\mathrm{BP}_{\sigma^2}(H|y)$
for $\sigma^2>0$ is expressed asymptotically as
\[
 \mathcal{L}_{\sigma^2}(h,\lambda_0) = \mathcal{M}(h,\lambda_{\sigma^2}),
\]
and it is extrapolated formally to $\sigma^2\le 0$ by the right-hand
side. It becomes the surface with constant signed distance
$\lambda(u)=\lambda_0$ when $\sigma^2=0$. For $\sigma^2\in\mathbb{R}$,
the deviation $\lambda_{\sigma^2}(u)-\lambda_0 = -\sigma^2 \kappa(u)$ is
proportional to $\sigma^2$.  Therefore, the formal definition of
$\mathcal{L}_{\sigma^2}(h,\lambda_0)$ for $\sigma^2<0$ makes sense, at
least, in terms of computation, although $\mathrm{BP}_{\sigma^2}(H|y)$
is not defined. In fact, $\mathcal{L}_{\sigma^2}(h,\lambda_0)$ is
interpreted as the contour surface of $\mathrm{NBP}_{\sigma^2}(H|y)$ for
$\sigma^2\in\mathbb{R}$, because we will get the same expression of
$\lambda_{\sigma^2}(u)$ for $\mathrm{NBP}_{\sigma^2}(H|y)=1-\alpha'$ by
substituting $\sigma z_\alpha = z_{\alpha'}$ in the proof of
Lemma~\ref{lem:contourbp}.

\begin{lemm} \label{lem:additivity}
Two functions $h,s\in \mathcal{S}$ are denoted as $h \doteq s$, if
 $h_0=s_0$, $h_{ij}=s_{ij}$, $h_{ijk}=s_{ijk}$, and $h_{ijkl}=s_{ijkl}$
 by ignoring the difference between $h_i$ and $s_i$.  Then, for
 $\lambda_0, \xi_0, \sigma^2, \tau^2 \in \mathbb{R}$, the following
additivity property holds.
\begin{equation} \label{eq:additivity}
 \mathcal{L}_{\tau^2}(
 \mathcal{L}_{\sigma^2}(h,\lambda_0),\xi_0) \doteq \mathcal{L}_{\sigma^2
 + \tau^2}(h,\lambda_0+\xi_0).
\end{equation}
As a special case, ``$\doteq$'' in (\ref{eq:additivity}) is replaced by
``$\simeq$'' if $\sigma^2 \xi_0 = \tau^2 \lambda_0$.  In particular,
the identity operator $\mathcal{L}_0(h,0) \simeq h$, and the inverse
operator
\[
 \mathcal{L}_{-\sigma^2}(
\mathcal{L}_{\sigma^2}(h,\lambda_0),-\lambda_0) \simeq h
\]
hold for the $h_i$ term too.
\end{lemm}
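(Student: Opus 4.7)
The plan is to prove (\ref{eq:additivity}) by direct coefficient comparison, applying Lemma~\ref{lem:contourbp} twice. Let $s := \mathcal{L}_{\sigma^2}(h,\lambda_0)$, $t := \mathcal{L}_{\tau^2}(s,\xi_0)$, and $r := \mathcal{L}_{\sigma^2+\tau^2}(h,\lambda_0+\xi_0)$. From Lemma~\ref{lem:contourbp} I will have explicit expansions of $s_0, s_i, s_{ij}, s_{ijk}, s_{ijkl}$ in terms of the coefficients of $h$, $\lambda_0$ and $\sigma^2$. Reapplying the same lemma with $s$ in place of $h$, $\xi_0$ in place of $\lambda_0$, and $\tau^2$ in place of $\sigma^2$ yields $t$; expanding the $s$-coefficients that appear and comparing with the direct formula for $r$ (where $\lambda_0$ is replaced by $\lambda_0+\xi_0$ and $\sigma^2$ by $\sigma^2+\tau^2$) will produce the desired identities. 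My task is to verify $t_0=r_0$, $t_{ij}=r_{ij}$, $t_{ijk}=r_{ijk}$, $t_{ijkl}=r_{ijkl}$ up to $\ordiiii$, and then to treat the $h_i$-level term separately to pinpoint when $\doteq$ improves to $\simeq$.

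Three of the four identities should be almost immediate: $t_0=s_0-\xi_0=h_0-(\lambda_0+\xi_0)=r_0$; $t_{ijkl}=s_{ijkl}=h_{ijkl}=r_{ijkl}$; and for $t_{ijk}$, since the $\sigma^2$- and $\tau^2$-corrections are of higher order once combined with the cubic factors involved, only the $\lambda_0$ and $\xi_0$ shifts contribute at order $\ordii$, giving coefficient $-2(\lambda_0+\xi_0)$ times the symmetric cubic $h_{mi}h_{mjk}+h_{mj}h_{mik}+h_{mk}h_{mij}$ in agreement with $r_{ijk}$. The heart of the argument is $t_{ij}$. Expanding $-2\xi_0\, s_{mi} s_{mj}$ through $s_{mi} \simeq h_{mi} - 2\lambda_0 h_{km}h_{ki} + \sigma^2(6h_{mikk}-\cdots)$ will produce the leading term $-2\xi_0 h_{mi}h_{mj}$ together with a cross term $8\xi_0\lambda_0 h_{ml}h_{mi}h_{lj}$; the cubic contribution $4\xi_0^2 s_{ml}s_{mi}s_{lj}\simeq 4\xi_0^2 h_{ml}h_{mi}h_{lj}$ adds a further piece, and separately $\tau^2(6s_{ijmm}-2s_{mm}s_{li}s_{lj}-4s_{ml}s_{mi}s_{lj}) \simeq \tau^2(6h_{ijmm}-2h_{mm}h_{li}h_{lj}-4h_{ml}h_{mi}h_{lj})$ at the order kept. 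Adding these to $s_{ij}$ and invoking $\lambda_0^2+2\lambda_0\xi_0+\xi_0^2=(\lambda_0+\xi_0)^2$ will reproduce the formula for $r_{ij}$ in (\ref{eq:contourcoef}). Truly mixed products such as $\sigma^2\xi_0\cdot h_{**}h_{**}h_{**}$ or $\sigma^2\tau^2 h_{****}$ are $\ordiiii$ and can be dropped.

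The remaining work is the $h_i$-level bookkeeping, which I expect to be the delicate step. Writing $t_i = s_i - 2\xi_0\, s_m s_{mi} + \tau^2(3 s_{mmi} - 6\xi_0\, s_{ml} s_{mli} - 6\xi_0\, s_{mi} s_{mll})$ and substituting the expressions for $s_i, s_m, s_{mi}, s_{mmi}$, one sees that the $\sigma^2\xi_0$ contribution coming from the $\sigma^2$-correction inside $3s_{mmi}$ and the $\tau^2\lambda_0$ contribution coming from the $\lambda_0$-corrections of $s_{mmi}, s_{ml}s_{mli}, s_{mi}s_{mll}$ do not cancel in general; the discrepancy with $r_i$ will reduce to a single expression of the form $(\sigma^2\xi_0-\tau^2\lambda_0)$ times a sum of cubics in the second and third derivatives of $h$. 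Hence $t\doteq r$ always, and the discrepancy vanishes exactly when $\sigma^2\xi_0=\tau^2\lambda_0$, upgrading $\doteq$ to $\simeq$. The remaining two items then follow by specialization: for $\mathcal{L}_0(h,0)$, Lemma~\ref{lem:contourbp} forces $\lambda(u)\equiv 0$ in $s=\mathcal{M}(h,\lambda)$, so by (\ref{eq:sdefinition}) the shifted surface is $h$ itself; and for the inverse, taking $\tau^2=-\sigma^2$, $\xi_0=-\lambda_0$ gives $\sigma^2+\tau^2=0$, $\lambda_0+\xi_0=0$, and $\sigma^2\xi_0=-\sigma^2\lambda_0=\tau^2\lambda_0$, so the special case is active and $\mathcal{L}_{-\sigma^2}(\mathcal{L}_{\sigma^2}(h,\lambda_0),-\lambda_0) \simeq \mathcal{L}_0(h,0) \simeq h$ with the full $h_i$ agreement.
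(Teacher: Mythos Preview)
Your approach is exactly the paper's: set $s=\mathcal{L}_{\sigma^2}(h,\lambda_0)$, apply Lemma~\ref{lem:contourbp} once more with $(\tau^2,\xi_0)$, and compare the resulting coefficients with those of $\mathcal{L}_{\sigma^2+\tau^2}(h,\lambda_0+\xi_0)$; the paper finds the single leftover term $6(\sigma^2\xi_0-\tau^2\lambda_0)h_{ml}h_{mli}$ in the linear coefficient, matching your conclusion. One bookkeeping caution: by (\ref{eq:contourcoef}) the coefficient $s_{ijk}$ carries \emph{no} $\sigma^2$-correction, so the cross term you attribute to ``the $\sigma^2$-correction inside $3s_{mmi}$'' is not there---the relevant $\sigma^2\xi_0$ piece instead enters through $-2\xi_0\,s_m s_{mi}$ via the $3\sigma^2 h_{llm}$ part of $s_m$, and it lands on $h_{mi}h_{mll}$ (where it combines additively), while the non-additive discrepancy sits entirely on $h_{ml}h_{mli}$.
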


\subsection{Asymptotic expansion of the unbiased $p$-value}
\label{sec:asymptotic-pv}

We are now prepared to derive the expression of the fourth-order
accurate $p$-value mentioned in Section~\ref{sec:asymptotic-bp}.  We
consider a surface on which PV remains constant.  For $H=\mathcal{R}(h)$
with $h\in\mathcal{S}$, we consider a function $s(u)$ of $u\in
\mathbb{R}^q$ satisfying
 \[
  \mathrm{PV}(H|(u,-s(u))) = \alpha,\quad u\in \mathbb{R}^q,
 \]
meaning $\mathrm{PV}(H|y)=\alpha$ is constant for any
$y\in\mathcal{B}(s)$.  For a specified significance level $\alpha$, we
will reject $H_0$ if $y\not\in \mathcal{R}(s)$, and accept $H_0$ if
$y\in \mathcal{R}(s)$.  Since PV is fourth-order accurate, the
acceptance probability for any $\mu=(\theta,-h(\theta))\in \partial H$
is expressed as
\[
\mathrm{BP}(\mathcal{R}(s) | (\theta,-h(\theta))) \simeq
 1- \alpha,\quad \theta\in\mathbb{R}^q,
\]
meaning $\partial H$ is the contour surface of the bootstrap probability
of $\mathcal{R}(s)$.  

For a specified $y=(0,\lambda_0-h_0)$, we will
choose the value of $\alpha$ so that $y \in \mathcal{B}(s)$. 
Considering $(0,\lambda_0-h_0)\in \mathcal{B}(s) \Leftrightarrow
\lambda_0 -h_0 = -s_0 \Leftrightarrow (0,-\lambda_0 - s_0)\in \partial
H$, we have
\[
 h \simeq \mathcal{L}_1(s,-\lambda_0).
\]
Using the inverse operator in Lemma~\ref{lem:additivity},
 the contour surface of PV is expressed as
\[
 s \simeq \mathcal{L}_{-1}(h,\lambda_0).
\]
The expression of $\mathrm{PV}(H|y)$ will be obtained as $\alpha$
for $y \in \mathcal{B}(s)$, and thus, by choosing $\mu=(0,-h_0)$ with
$\theta=0$, we get
\[
 \mathrm{PV}(H|y) \simeq 1 - \mathrm{BP}(\mathcal{R}(s) |
 (0,-h_0)).
\]

For applying Theorem~\ref{thm:bp} to $ \mathrm{BP}(\mathcal{R}(s) |
(0,-h_0))$, we would like to replace $h\to s$ and $\lambda_0-h_0 \to
-h_0$ in $\mathrm{BP}(\mathcal{R}(h) | (0,\lambda_0-h_0))$.  This
implies replacing $\lambda_0\to-\lambda_0$ as well as
$\gamma_i\to\gamma_i(s,0)$ in (\ref{eq:bpexpgamma}), because
$\lambda_0-h_0 \to (-\lambda_0) - s_0 = -h_0$ as desired.  This is
equivalent to replacing $\beta_0\to -\beta_0$,
$\beta_1\to\beta_1-\beta_3$, $\beta_2\to\beta_2$ in (\ref{eq:bpexpbeta})
as shown in the proof of the theorem below, and therefore, we obtain
$\mathrm{PV}(H|y) \simeq 1 - \bar\Phi((-\beta_0) + (\beta_1 - \beta_3) +
\beta_2) = \bar\Phi( \beta_0 -\beta_1 - \beta_2 + \beta_3)$.

\begin{theo}[Fourth-order accurate $p$-value] \label{thm:pval}
 For the $H$ and $y=(0,\lambda_0-h_0)$ given in Theorem~\ref{thm:bp}, 
an approximately unbiased $p$-value of fourth-order accuracy is 
 expressed asymptotically as (\ref{eq:pvalexpbeta}).
\end{theo}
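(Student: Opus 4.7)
The plan follows the geometric duality between $p$-value contours and bootstrap-probability contours, using the $\mathcal{L}_{\sigma^2}$ operator developed in Section~\ref{sec:contourbp}. First, I would introduce the level set $\mathcal{B}(s)$ on which the sought fourth-order accurate $p$-value equals a specified significance level $\alpha$, so that rejection corresponds exactly to $y \notin \mathcal{R}(s)$. The defining property of a fourth-order unbiased $p$-value translates into the condition $\mathrm{BP}(\mathcal{R}(s) \mid \mu) \simeq 1-\alpha$ for every $\mu \in \partial H$, i.e., $\partial H = \mathcal{B}(h)$ is itself the level-$(1-\alpha)$ contour surface of $\mathrm{BP}(\mathcal{R}(s)\mid\cdot)$ with $\sigma^2 = 1$. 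Since $(0,\lambda_0-h_0) \in \mathcal{B}(s)$ and $(0,-h_0) \in \partial H$, the signed distance from $\partial H$ to $\mathcal{B}(s)$ is $-\lambda_0$, so this condition reads $h \simeq \mathcal{L}_1(s,-\lambda_0)$.

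Next, I would invert this relation using the inverse operator property in Lemma~\ref{lem:additivity}, obtaining $s \simeq \mathcal{L}_{-1}(h,\lambda_0)$. The right-hand side is well-defined as a formal extrapolation because the explicit coefficient formulas of Lemma~\ref{lem:contourbp} remain valid when the argument $\sigma^2$ is replaced by $-1$. The $p$-value is then recovered by choosing the base point $\mu=(0,-h_0)$ in the contour property, which gives $\mathrm{PV}(H\mid y) \simeq 1 - \mathrm{BP}(\mathcal{R}(s)\mid(0,-h_0))$.

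To evaluate the right-hand side, I would apply Theorem~\ref{thm:bp} to $\mathrm{BP}(\mathcal{R}(s)\mid(0,-h_0))$, substituting $h \mapsto s$ and, since $(0,-h_0)$ lies at signed distance $-\lambda_0$ from $\mathcal{B}(s)=\partial\mathcal{R}(s)$, also $\lambda_0 \mapsto -\lambda_0$. The required geometric quantities $\gamma_i(s,0)$ are supplied by (\ref{eq:contourgamma}) with $\sigma^2=-1$, and crucially the coefficient $6\gamma_4 - 2\gamma_1\gamma_2 - 4\gamma_3$ appearing in $\gamma_1(s,0)$ is, by (\ref{eq:betabygamma}), exactly $\beta_3$. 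Repackaging the substituted expression into the $\beta_i$ variables then amounts to the replacements $\beta_0 \mapsto -\beta_0$, $\beta_1 \mapsto \beta_1 - \beta_3$, $\beta_2 \mapsto \beta_2$ inside (\ref{eq:bpexpbeta}). Using $1 - \bar\Phi(x) = \bar\Phi(-x)$ then yields $\mathrm{PV}(H\mid y) \simeq \bar\Phi(\beta_0 - \beta_1 - \beta_2 + \beta_3)$, which is (\ref{eq:pvalexpbeta}).

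The main obstacle is the bookkeeping in the last step: verifying that the $\sigma^2=-1$ contour correction produces exactly $\beta_3$ and no other stray $\ordiii$ combination once one folds $\gamma_1(s,0),\gamma_2(s,0),\gamma_3(s,0),\gamma_4(s,0)$ back into the three $\beta_i$'s after the sign flip $\lambda_0\mapsto-\lambda_0$. A secondary concern is that the inversion $s\simeq \mathcal{L}_{-1}(h,\lambda_0)$ is a priori only a $\doteq$ relation in the sense of Lemma~\ref{lem:additivity}, so the $h_i$ coefficient is not controlled; but by the class-$\mathcal{S}$ assumption $h_i = \ordii$, and by the observation in Section~\ref{sec:class-s} that such tilts affect the signed distance only through $\lambda_0(1+O(h_i^2))\simeq \lambda_0$, this discrepancy is absorbed into the $\beta_0$ term at order below $\ordiii$ and does not affect the final fourth-order expression.
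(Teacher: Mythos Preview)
Your proposal is correct and follows essentially the same route as the paper: define the PV-contour $s$, deduce $h\simeq\mathcal{L}_1(s,-\lambda_0)$ from fourth-order unbiasedness, invert via Lemma~\ref{lem:additivity} to $s\simeq\mathcal{L}_{-1}(h,\lambda_0)$, and then apply Theorem~\ref{thm:bp} with the substitutions $\lambda_0\mapsto-\lambda_0$, $\gamma_i\mapsto\gamma_i(s,0)$ to obtain the $\beta$-replacements $\beta_0\mapsto-\beta_0$, $\beta_1\mapsto\beta_1-\beta_3$, $\beta_2\mapsto\beta_2$. Your secondary concern about $\doteq$ versus $\simeq$ is unnecessary: Lemma~\ref{lem:additivity} states that the inverse operator identity $\mathcal{L}_{-\sigma^2}(\mathcal{L}_{\sigma^2}(h,\lambda_0),-\lambda_0)\simeq h$ holds with $\simeq$ (the condition $\sigma^2\xi_0=\tau^2\lambda_0$ is satisfied here), so $s\simeq\mathcal{L}_{-1}(h,\lambda_0)$ already controls the $h_i$ term.
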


Related results are given in Theorem~1 of \cite{Shimodaira:2008:TRN},
from which we borrowed the idea of the inverse operator.  An unusual
asymptotic theory of ``nearly flat'' surfaces is discussed there by
utilizing Fourier transform of surfaces instead of Taylor series for
handling non-smooth surfaces such as cones.

\subsection{Asymptotic expansion of the double bootstrap probability}
\label{sec:asymptotic-dbp}

To see the robustness of DBP against computational error in the
minimization of (\ref{eq:projection}), we replace $\hat\mu(H|y)$ in
(\ref{eq:dbptausigma}) by $\tilde \mu = (\theta,-h(\theta))\in \partial
H$ for some $\theta\in\mathbb{R}^q$.  We assume $\theta=\ordz$, meaning
that the computational error is $\ordi$ with respect to the original
parameter, say, $\eta$ in the spherical example.  We denote $
\widetilde{\mathrm{DBP}}_{\tau^2,\sigma^2}(H|y) $ for this modified
double bootstrap probability, and derive its asymptotic expansion for
$y=(0,\lambda_0-h_0)$.

First note that $ \mathrm{BP}_{\sigma^2}(H|Y^{+})
\ge \mathrm{BP}_{\sigma^2}(H|y) \Leftrightarrow Y^{+}\in \mathcal{R}(s)$
for $s=\mathcal{L}_{\sigma^2}(h,\lambda_0)$, and
\[
  \widetilde{\mathrm{DBP}}_{\tau^2,\sigma^2}(H|y)  =1 -
 \mathrm{BP}_{\tau^2}(\mathcal{R}(s)| \tilde \mu).
\]
By applying Theorem~\ref{thm:bps} to $
\mathrm{BP}_{\tau^2}(\mathcal{R}(s)| \tilde \mu)$, we get the the
following theorem via a straightforward computation.

\begin{theo}[Scaling-law of the double bootstrap probability] 
\label{thm:dbpexp} For the $H$ and $y=(0,\lambda_0-h_0)$ given in
Theorem~\ref{thm:bp}, the modified double bootstrap probability with
$\tilde \mu = (\theta,-h(\theta))$ is expressed asymptotically as
\begin{equation}
 \widetilde{\mathrm{DBP}}_{\tau^2,\sigma^2}(H|y) 
 \simeq 
\bar\Phi\Bigl[ 
\beta_0 \tau^{-1} - \beta_1 \tau - \beta_2 \tau^3 - \beta_3 \tau
 \sigma^2 
-\tau^{-1}(\tau^2+\sigma^2)  \kappa(\theta)
\Bigr], \label{eq:dbpexptheta}
\end{equation} 
where $\kappa(\theta)$ is defined in (\ref{eq:kappa}).
\end{theo}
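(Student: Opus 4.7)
The plan is to reduce the modified double bootstrap probability to a single bootstrap probability via the contour-surface identity, and then apply the scaling-law of Theorem~\ref{thm:bps}. As noted immediately before the theorem, for $s=\mathcal{L}_{\sigma^2}(h,\lambda_0)$ the surface $\mathcal{B}(s)$ is the level set of $\mathrm{BP}_{\sigma^2}(H|\cdot)$ through $y$, and since $\mathrm{BP}_{\sigma^2}(H|\cdot)$ is monotone across this contour (larger on the $\mathcal{R}(s)$-side), the event $\{Y^{+}:\mathrm{BP}_{\sigma^2}(H|Y^{+})\le\mathrm{BP}_{\sigma^2}(H|y)\}$ coincides with $\mathbb{R}^{q+1}\setminus\mathcal{R}(s)$. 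Substituting this into (\ref{eq:dbptausigma}) with $\hat\mu(H|y)$ replaced by $\tilde\mu$ yields
\[
  \widetilde{\mathrm{DBP}}_{\tau^2,\sigma^2}(H|y) = 1 - \mathrm{BP}_{\tau^2}\bigl(\mathcal{R}(s)\bigm|\tilde\mu\bigr),
\]
so the remaining task is to expand this single bootstrap probability.

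I would next apply Theorem~\ref{thm:bps}. Let $\beta_0^*,\beta_1^*,\beta_2^*$ denote the $\beta$-coefficients produced by Theorem~\ref{thm:bp} for the region $\mathcal{R}(s)$ and observation $\tilde\mu$; the scaling-law then gives
\[
  \mathrm{BP}_{\tau^2}(\mathcal{R}(s)|\tilde\mu) \simeq \bar\Phi\bigl[\beta_0^*\tau^{-1}+\beta_1^*\tau+\beta_2^*\tau^3\bigr].
\]
Combining with the identity above and using $1-\bar\Phi(z)=\bar\Phi(-z)$, comparing powers of $\tau$ against (\ref{eq:dbpexptheta}) reduces the theorem to the three identifications
\[
 \beta_0^*\simeq-\lambda_0+\sigma^2\kappa(\theta),\quad \beta_1^*\simeq\beta_1+\beta_3\sigma^2+\kappa(\theta),\quad \beta_2^*\simeq\beta_2.
\]
The first identification is the signed distance from $\tilde\mu=(\theta,-h(\theta))$ to $\mathcal{B}(s)$. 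Since $s=\mathcal{M}(h,\lambda_{\sigma^2})$ with $\lambda_{\sigma^2}(u)=\lambda_0-\sigma^2\kappa(u)$ by Lemma~\ref{lem:contourbp}, the boundary $\mathcal{B}(s)$ sits a signed normal distance $\lambda_{\sigma^2}(\theta)$ above $\mathcal{B}(h)$ near $\theta$, giving signed distance $-\lambda_{\sigma^2}(\theta)\simeq-\lambda_0+\sigma^2\kappa(\theta)$.

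For $\beta_1^*$ and $\beta_2^*$ I would compute $\gamma_i(s,\theta^*)$ at the projection $\theta^*$ of $\tilde\mu$ onto $\mathcal{B}(s)$. The values at $u=0$ are provided by (\ref{eq:contourgamma}); in particular, $\gamma_1(s,0)$ contains the term $\sigma^2(6\gamma_4-2\gamma_1\gamma_2-4\gamma_3)=\sigma^2\beta_3$, which is the source of the $\beta_3\sigma^2$ contribution to $\beta_1^*$. The $\theta$-dependent parts come from two sources: the Taylor expansion of $\gamma_i(s,\cdot)$ in $u$ via Lemma~\ref{lem:glocal}, applied with the coefficients of $s$ from (\ref{eq:contourcoef}); and the fact that $\theta^*-\theta\simeq\lambda_0\nabla s(\theta)=\ordi$ due to the slight tilt of the normal to $\mathcal{B}(s)$, which contributes a further $\ordiii$ term through the $\ordii$ gradient $\partial_k\gamma_1(s,0)=3 s_{iik}$. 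For $\beta_2^*$ both sources contribute only at $\ordiiii$, leaving $\beta_2^*\simeq\beta_2$.

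The main obstacle is verifying that the two $\theta$-dependent contributions to $\beta_1^*$ combine exactly to $\kappa(\theta)$. This is polynomial algebra on the indexed coefficients $h_{ij\ldots}$ and $s_{ij\ldots}$, governed by the order bookkeeping of (\ref{eq:classscoef}). For instance, the direct expansion of the first source yields an extra term $-6\lambda_0 h_{mk}h_{mii}\theta_k$ absent from the linear part $(3h_{mmi}-6\lambda_0 h_{ml}h_{mli})\theta_i$ of $\kappa(\theta)$; this is precisely canceled by the projection-correction from the second source, $+6\lambda_0 h_{iik}h_{kj}\theta_j$, by symmetry of $h_{ij}$. Analogous but more laborious cancellations at quadratic order in $\theta$ also need to be checked. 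This index-algebra bookkeeping is the ``straightforward computation'' that the author invokes.
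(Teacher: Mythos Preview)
Your reduction to $\widetilde{\mathrm{DBP}}_{\tau^2,\sigma^2}(H|y)=1-\mathrm{BP}_{\tau^2}(\mathcal{R}(s)|\tilde\mu)$ and your application of Theorem~\ref{thm:bps} are exactly what the paper does, and the target identifications for $\beta_0^*,\beta_1^*,\beta_2^*$ are correct. The difference lies in how the $\theta$-dependence is handled.

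You propose to compute the geometric quantities of $\mathcal{B}(s)$ at the projection point $\theta^*$ of $\tilde\mu$ onto $\mathcal{B}(s)$, expanding $\gamma_i(s,\theta^*)$ from the $s$-coefficients in (\ref{eq:contourcoef}) and then correcting for $\theta^*\neq\theta$. This is valid, but as you note it forces you through several index cancellations (linear and quadratic in $\theta$) that are not transparent. The paper avoids all of this by a re-centering trick: it first establishes the $\theta=0$ case (\ref{eq:dbpexp}) by quoting the replacement rules $\beta_0\to-\beta_0$, $\beta_1\to\beta_1+\sigma^2\beta_3$, $\beta_2\to\beta_2$ already proved in Theorem~\ref{thm:pval}; then for general $\theta$ it passes to local coordinates at $(\theta,-h(\theta))\in\partial H$ (Lemma~\ref{lem:hlocal}), so that $\tilde\mu$ is now the origin, the region is $\mathcal{R}(\tilde h)$ with $\tilde h\in\mathcal{S}$, and the contour surface is $\mathcal{L}_{\sigma^2}(\tilde h,\lambda_{\sigma^2}(\theta))$. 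One may then simply substitute $\lambda_0\to\lambda_{\sigma^2}(\theta)$ and $\gamma_i\to\gamma_i(h,\theta)$ into (\ref{eq:betabygamma}) and (\ref{eq:dbpexp}). The crucial point is that $\kappa$ was \emph{defined} in (\ref{eq:kappa}) as $\gamma_1(h,\theta)-\gamma_1-\lambda_0(\gamma_2(h,\theta)-\gamma_2)$, so $\tilde\beta_1=\gamma_1(h,\theta)-\lambda_{\sigma^2}(\theta)\gamma_2(h,\theta)+\tfrac{4}{3}\lambda_{\sigma^2}(\theta)^2\gamma_3(h,\theta)\simeq\beta_1+\kappa(\theta)$ drops out in one line, with no cancellations to chase. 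Your route would reproduce this after the algebra, but the paper's re-centering on $\partial H$ rather than on $\mathcal{B}(s)$ is what makes the ``straightforward computation'' genuinely short.
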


When $h_i=0$, we have $\hat\mu(H|y)=(0,-h_0)$. By letting
$\theta=0$ in (\ref{eq:dbpexptheta}), we obtain
\begin{equation} \label{eq:dbpexp}
 \mathrm{DBP}_{\tau^2,\sigma^2}(H|y) \simeq
\bar\Phi\Bigl[
\beta_0 \tau^{-1} - \beta_1 \tau - \beta_2 \tau^3 - \beta_3 \tau \sigma^2
\Bigr].
\end{equation}
When $h_i=\ordii$, we have $\hat\mu(H|y)=(\theta,-h(\theta))$ with some
$\theta=\ordii$ for which $\kappa(\theta)\simeq0$.  Therefore,
(\ref{eq:dbpexp}) holds for any $h\in\mathcal{S}$, and
(\ref{eq:dbpexps}) follows. This argument also confirms
that the four geometric quantities as well as $\beta_i$ defined at
$\theta=0$ are interpreted as those defined at $\hat\mu(H|y)$, because
$\gamma_i(h,\theta)\simeq \gamma_i$ for $\theta=\ordii$.

Comparing (\ref{eq:dbpexptheta}) with (\ref{eq:dbpexp}), we find that
$\kappa(\theta)$ represents deviation of $
\widetilde{\mathrm{DBP}}_{\tau^2,\sigma^2}(H|y)$ from
$\mathrm{DBP}_{\tau^2,\sigma^2}(H|y)$ due to computational error of
$\hat\mu(H|y)$. For $\theta=\ordz$, the deviation is
$\kappa(\theta)=\ordii$. $ \widetilde{\mathrm{DBP}}_{1,1}(H|y) =
\mathrm{DBP}_{1,1}(H|y) + \ordii$ and thus DBP is degraded from
third-order accurate to second-order accurate under the computational
error.  However, the deviation disappears in (\ref{eq:dbpexptheta}) when
$\sigma^2=-\tau^2$. In particular, $
\widetilde{\mathrm{DBP}}_{1,-1}(H|y) \simeq \mathrm{DBP}_{1,-1}(H|y)$
and thus DAU remains fourth-order accurate even if there is
computational error of $\theta=\ordz$.

Let us assume that $\partial H$ is a constant-mean-curvature surface.
Noting $\gamma_1(h,\theta)=\gamma_1$ for any $\theta=\ordz$, we have
$h_{mmi} = 0$, $6 h_{mmij} - 2 h_{mm} h_{li} h_{lj} - 4 h_{ml} h_{mi}
h_{lj} = 0$, and thus $ \kappa(\theta) = - 6\lambda_0 h_{ml} h_{mli}
\theta_i=\ordiii $. Therefore, DBP is degraded from fourth-order
accurate to third-order accurate. In addition, we may assume that
$\gamma_2(h,\theta)=\gamma_2$ for any $\theta=\ordz$, and so
$h_{ml}h_{mli}=0$; this is the case for the spherical example.  Then the
deviation $ \kappa(\theta)\simeq 0$, and DBP remains fourth-order
accurate. Therefore, DBP is as good as DAU under these conditions.

\subsection{Asymptotic accuracy of bootstrap methods}
\label{sec:asymptotic-accuracy}

For deriving the rejection probabilities (\ref{eq:rejprob-bp}) and
(\ref{eq:rejprob-dbp}) mentioned in Section~\ref{sec:intro}, here we
assume that $\mu=(0,-h_0)$ in the $(u,v)$ coordinates. Thus the
expressions of $\gamma_i$ and $\beta_i$ in Section~\ref{sec:class-s}
are now interpreted as geometric quantities defined at $\mu\in \partial
H$ instead of $\hat\mu(H|y)$.

First we consider testing $H_0$ by using $ \mathrm{NBP}_{\sigma^2}(H|y)$
as an approximate $p$-value.  For a given $\alpha$, we may choose
$\lambda_0\in\mathbb{R}$ so that
$\mathrm{NBP}_{\sigma^2}(H|(0,\lambda_0-h_0))=\alpha$.  Then the
acceptance region is expressed as $ \{ y | \mathrm{NBP}_{\sigma^2}(H|y)
\ge \alpha\} = \mathcal{R}(s) $ using
$s=\mathcal{L}_{\sigma^2}(h,\lambda_0)$, and thus
\[
P\Bigl( \mathrm{NBP}_{\sigma^2}(H|Y) < \alpha  \Bigr) =  
1-\mathrm{BP}(\mathcal{R}(s)| (0,-h_0)).
\]
This is computed as
$\widetilde{\mathrm{DBP}}_{1,\sigma^2}(H|(0,\lambda_0-h_0))$ with $\tilde
\mu=(0,-h_0)$ in the theorem below.
\begin{theo}[Rejection probability of the normalized bootstrap
 probability] \label{thm:bpacc}
 For the $H$ given in Theorem~\ref{thm:bp}, and $\mu=(0,-h_0)\in
 \partial H$, the rejection probability of
 $\mathrm{NBP}_{\sigma^2}(H|y)$ is
\begin{equation}\label{eq:rejbpexp}
\begin{split}
 P\Bigl( \mathrm{NBP}_{\sigma^2}(H|Y) < \alpha  \Bigr)
\simeq \Phi\Bigl[
z_\alpha &+(1+\sigma^2)\Bigl\{
\gamma_1+z_\alpha \gamma_2 + 
\tfrac{4}{3} z_\alpha^2 \gamma_3 - \gamma_1\gamma_2\Bigr\} \\
 & +(1+\sigma^2)^2 \Bigl\{3\gamma_4 - \tfrac{4}{3} \gamma_3
\Bigr\}-\sigma^2 \tfrac{4}{3} \gamma_3\Bigr]. 
\end{split} 
\end{equation}
In particular, $\sigma^2=1$ gives (\ref{eq:rejprob-bp}), and
$\sigma^2=-1$ gives
\[
 P\Bigl( \mathrm{AU}(H|Y) < \alpha  \Bigr) \simeq \Phi(z_\alpha +
 \tfrac{4}{3} \gamma_3) = \alpha + \ordiii.
\]
Therefore BP is first-order accurate, and AU is third-order accurate.
\end{theo}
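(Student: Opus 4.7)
The plan is to execute the roadmap sketched in the paragraph preceding the theorem: rewrite the rejection event as an acceptance region of the form $\mathcal{R}(s)$ with $s=\mathcal{L}_{\sigma^2}(h,\lambda_0)$, recognize the resulting probability as a modified double bootstrap probability with $\tilde\mu=\mu=(0,-h_0)$, then invoke Theorem~\ref{thm:dbpexp}. The nontrivial part is to express everything in terms of $z_\alpha$ rather than the auxiliary parameter $\lambda_0$ that has been introduced.

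First, I would determine $\lambda_0$ implicitly by setting $\mathrm{NBP}_{\sigma^2}(H|(0,\lambda_0-h_0))=\alpha$. Using (\ref{eq:nbps}), this is $\lambda_0+\beta_1(\lambda_0)\sigma^2+\beta_2\sigma^4\simeq -z_\alpha$, where $\beta_1(\lambda_0)=\gamma_1-\lambda_0\gamma_2+\tfrac{4}{3}\lambda_0^2\gamma_3$. Writing $\lambda_0=-z_\alpha+\delta$ with $\delta=\ordi$ and substituting, I would expand $\beta_1(\lambda_0)\simeq \gamma_1+z_\alpha\gamma_2+\tfrac{4}{3}z_\alpha^2\gamma_3-\delta\gamma_2$ up to $\ordiii$; since $\delta\simeq-\gamma_1\sigma^2$ to leading order, this produces the $\ordiii$ correction $-\delta\gamma_2\simeq \sigma^2\gamma_1\gamma_2$, so that $\beta_1(\lambda_0)\simeq\gamma_1+z_\alpha\gamma_2+\tfrac{4}{3}z_\alpha^2\gamma_3+\sigma^2\gamma_1\gamma_2$. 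This iteration is the step that needs careful bookkeeping: omitting the $-\delta\gamma_2$ correction produces a wrong coefficient on the $\gamma_1\gamma_2$ term in the final answer.

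Next, since $\mathrm{NBP}_{\sigma^2}(H|y)\ge\alpha$ if and only if $y\in\mathcal{R}(s)$ for $s=\mathcal{L}_{\sigma^2}(h,\lambda_0)$, the rejection probability is $1-\mathrm{BP}(\mathcal{R}(s)|(0,-h_0))$, which matches $\widetilde{\mathrm{DBP}}_{1,\sigma^2}(H|(0,\lambda_0-h_0))$ in Theorem~\ref{thm:dbpexp} at $\tilde\mu=(0,-h_0)$, i.e.\ $\theta=0$. Since $\kappa(0)=0$, applying that theorem with $\tau^2=1$ gives
\[
P\bigl(\mathrm{NBP}_{\sigma^2}(H|Y)<\alpha\bigr)\simeq \Phi\bigl[-\lambda_0+\beta_1(\lambda_0)+\beta_2+\beta_3\sigma^2\bigr].
\]
Substituting $-\lambda_0=\beta_1(\lambda_0)\sigma^2+\beta_2\sigma^4+z_\alpha$ rearranges the argument into $z_\alpha+(1+\sigma^2)\beta_1(\lambda_0)+(1+\sigma^4)\beta_2+\sigma^2\beta_3$, into which I would insert the expansion of $\beta_1(\lambda_0)$ together with $\beta_2=3\gamma_4-\gamma_1\gamma_2-\tfrac{4}{3}\gamma_3$ and $\beta_3=6\gamma_4-2\gamma_1\gamma_2-4\gamma_3$. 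Collecting the $\gamma_4$ coefficients gives $3(1+\sigma^2)^2$; the $\gamma_3$ coefficients collapse to $-\tfrac{4}{3}(1+\sigma^2)^2-\tfrac{4}{3}\sigma^2$; and the $\gamma_1\gamma_2$ coefficients, combining the three contributions $(\sigma^2+\sigma^4)$, $-(1+\sigma^4)$, and $-2\sigma^2$, simplify precisely to $-(1+\sigma^2)$. This yields (\ref{eq:rejbpexp}).

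Finally, the two specializations follow by inspection: at $\sigma^2=1$ the formula reduces to $\Phi[z_\alpha+2\gamma_1]+\ordii$, recovering (\ref{eq:rejprob-bp}); at $\sigma^2=-1$ the factors $(1+\sigma^2)$ and $(1+\sigma^2)^2$ kill all but the last term, leaving $\Phi[z_\alpha+\tfrac{4}{3}\gamma_3]=\alpha+\ordiii$ and proving that AU is third-order accurate while BP is only first-order. The main obstacle, as already flagged, is the implicit solution for $\lambda_0$: one must carry the next-order iteration through far enough to capture the $\sigma^2\gamma_1\gamma_2$ correction in $\beta_1(\lambda_0)$, because that is the sole source of the $-(1+\sigma^2)\gamma_1\gamma_2$ term and a naive substitution $\lambda_0=-z_\alpha$ would give $-(1+\sigma^2)^2\gamma_1\gamma_2$ instead.
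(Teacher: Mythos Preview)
Your proposal is correct and follows essentially the same route as the paper: identify the acceptance region as $\mathcal{R}(s)$ with $s=\mathcal{L}_{\sigma^2}(h,\lambda_0)$, recognize the rejection probability as $\widetilde{\mathrm{DBP}}_{1,\sigma^2}$ at $\theta=0$ via Theorem~\ref{thm:dbpexp}, and eliminate $\lambda_0$ in favor of $z_\alpha$. The only cosmetic difference is that the paper solves explicitly for $\lambda_0$ before substituting, whereas you keep $\beta_1(\lambda_0)$ symbolic and substitute $-\lambda_0=\beta_1(\lambda_0)\sigma^2+\beta_2\sigma^4+z_\alpha$; your careful handling of the $\sigma^2\gamma_1\gamma_2$ correction in $\beta_1(\lambda_0)$ is exactly the point that makes the $-(1+\sigma^2)\gamma_1\gamma_2$ coefficient come out right.
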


Next we consider testing $H_0$ by using $\mathrm{DBP}_{1,\sigma^2}(H|y)$
as an approximate $p$-value.  For a given $\alpha$, we may choose
$\lambda_0\in\mathbb{R}$ so that
$\mathrm{DBP}_{1,\sigma^2}(H|(0,\lambda_0-h_0))=\alpha$.  We will see, in
the proof of the theorem below, the acceptance region is expressed as $
\{ y | \mathrm{DBP}_{1,\sigma^2}(H|y) \ge \alpha\} = \mathcal{R}(s) $
using $s=\mathcal{L}_{-1}(h,\lambda_0)$, and thus the rejection
probability is $1-\mathrm{BP}(\mathcal{R}(s)| (0,-h_0))$.  This is
computed as $\widetilde{\mathrm{DBP}}_{1,-1}(H|(0,\lambda_0-h_0))$ with
$\tilde \mu=(0,-h_0)$.

\begin{theo}[Rejection probability of the double bootstrap probability]
 \label{thm:dbpacc}
 For the $H$ given in Theorem~\ref{thm:bp}, and $\mu=(0,-h_0)\in
 \partial H$, the rejection probability of
 $\mathrm{DBP}_{1,\sigma^2}(H|y)$ is
\begin{equation}
 P\Bigl( \mathrm{DBP}_{1,\sigma^2}(H|Y) < \alpha  \Bigr)
\simeq \Phi\Bigl[
z_\alpha -(1+\sigma^2) \beta_3 \Bigr]. \label{eq:rejdbpexp} 
\end{equation}
In particular, $\sigma^2=1$ gives (\ref{eq:rejprob-dbp}), and
 $\sigma^2=-1$ gives
\[
 P\Bigl( \mathrm{DAU}(H|Y) < \alpha  \Bigr) \simeq \alpha.
\]
Therefore, DBP is third-order accurate, and DAU is fourth-order accurate.
\end{theo}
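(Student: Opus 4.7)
The plan mirrors the proofs of Theorem~\ref{thm:pval} and Theorem~\ref{thm:bpacc}: express the acceptance region $\{y : \mathrm{DBP}_{1,\sigma^2}(H \mid y) \ge \alpha\}$ as the sub-graph $\mathcal{R}(s)$ of a contour surface $s$, recast the rejection probability as $1 - \mathrm{BP}(\mathcal{R}(s) \mid \mu)$ with $\mu = (0,-h_0)$, and recognise the result as a particular value of $\widetilde{\mathrm{DBP}}$ to which Theorem~\ref{thm:dbpexp} applies directly. I begin by choosing $\lambda_0$ through the defining equation $\mathrm{DBP}_{1,\sigma^2}(H \mid (0,\lambda_0-h_0)) = \alpha$; by (\ref{eq:dbpexps}) this reads $\beta_0 - \beta_1 - \beta_2 - \beta_3\sigma^2 \simeq -z_\alpha$ with $\beta_0 = \lambda_0$.

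The crucial step is to identify the contour: the acceptance region is $\mathcal{R}(s)$ with $s = \mathcal{L}_{-1}(h,\lambda_0)$. Viewed as functions of $y$, the DBP expansion (\ref{eq:dbpexps}) and the PV expansion (\ref{eq:pvalexpbeta}) differ only by the additive term $-\beta_3(1+\sigma^2)$, and the location dependence of this shift is $\ordiiii$: since $\beta_3 = 6\gamma_4 - 2\gamma_1\gamma_2 - 4\gamma_3 = \ordiii$, moving $y$ across the candidate contour changes each of $\gamma_4$, $\gamma_1\gamma_2$, and $\gamma_3$ by $\ordiiii$, as one verifies from (\ref{eq:hijgamma}). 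Hence the level-$\alpha$ contour of $\mathrm{DBP}_{1,\sigma^2}$ through $(0,\lambda_0-h_0)$ coincides with the PV contour through the same point up to $\ordiiii$, and the latter was identified in Section~\ref{sec:asymptotic-pv} as $\mathcal{L}_{-1}(h,\lambda_0)$. This contour identification is the main obstacle; what follows is a mechanical substitution.

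With $s = \mathcal{L}_{-1}(h,\lambda_0)$ and $Y \sim N_{q+1}(\mu,I)$ at $\mu = (0,-h_0)$, the rejection probability equals $1 - \mathrm{BP}(\mathcal{R}(s) \mid \mu)$. The identity $\widetilde{\mathrm{DBP}}_{\tau^2,\sigma^2}(H \mid y) = 1 - \mathrm{BP}_{\tau^2}(\mathcal{R}(\mathcal{L}_{\sigma^2}(h,\lambda_0)) \mid \tilde\mu)$ established at the start of the proof of Theorem~\ref{thm:dbpexp}, applied with $\tau^2 = 1$, the inner $\sigma^2$ replaced by $-1$, and $\tilde\mu = (0,-h_0)$ (so $\theta = 0$), shows that this probability equals $\widetilde{\mathrm{DBP}}_{1,-1}(H \mid (0,\lambda_0-h_0))$. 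Specialising (\ref{eq:dbpexptheta}) to $\tau^2 = 1$, $\sigma^2 = -1$, $\theta = 0$ makes the $\kappa$ term drop out (its coefficient $\tau^{-1}(\tau^2+\sigma^2)$ vanishes), leaving $\bar\Phi[\beta_0 - \beta_1 - \beta_2 + \beta_3]$. Substituting $\beta_0 - \beta_1 - \beta_2 \simeq -z_\alpha + \beta_3\sigma^2$ from the defining equation yields $\bar\Phi[-z_\alpha + \beta_3(1+\sigma^2)] = \Phi[z_\alpha - (1+\sigma^2)\beta_3]$, which is (\ref{eq:rejdbpexp}). Setting $\sigma^2 = 1$ recovers (\ref{eq:rejprob-dbp}), and $\sigma^2 = -1$ gives $\Phi(z_\alpha) = \alpha$ up to $\ordiiii$; the accuracy statements for DBP and DAU follow.
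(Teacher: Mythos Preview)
Your proof is correct and follows the same architecture as the paper's: identify the acceptance region as $\mathcal{R}(s)$ with $s \simeq \mathcal{L}_{-1}(h,\lambda_0)$, recognise the rejection probability as $\widetilde{\mathrm{DBP}}_{1,-1}(H \mid (0,\lambda_0-h_0))$ at $\theta=0$, apply Theorem~\ref{thm:dbpexp}, and substitute the defining equation $\beta_0-\beta_1-\beta_2-\sigma^2\beta_3\simeq -z_\alpha$. The only variation is in how the contour is identified: you argue that $\mathrm{DBP}_{1,\sigma^2}$ and $\mathrm{PV}$ differ inside $\bar\Phi$ only by the location-constant shift $-\beta_3(1+\sigma^2)$ and hence share level sets, invoking the $\mathrm{PV}$ contour from Section~\ref{sec:asymptotic-pv}, whereas the paper solves the $\mathrm{DBP}$ equation for $\lambda(u)$ directly by substituting $\gamma_i(h,u)$ for $\gamma_i$ and verifies $\lambda(u)\simeq\lambda_0+\kappa(u)$; both routes are valid and yield the same $s$.
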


\section*{Acknowledgments}

I appreciate Toshitaka Uchiyama for helpful discussion. In particular,
Theorem~\ref{thm:bps} and Theorem~\ref{thm:pval} were first proved in
his 2005 master's thesis at Tokyo Institute of Technology via an
approach of Shimodaira (2004). I also appreciate Aizhen Ren for 
discussion about simulation study.

\appendix

\section*{Appendix}

The following lemma is used in
the proof of Theorem~\ref{thm:bp} below.

\begin{lemm}[Moments of normal random variables]
 \label{lem:normalmoments} Let $\delta_{ij}$ denote the Kronecker delta,
and indices $i,j,\ldots \in \{1,\ldots, q\}$. Consider the multivariate
normal distribution $(U_1,\ldots,U_q)\sim N_q(0,I_q)$.  Then the first
three even-order moments are
\begin{gather*}
 E(U_i U_j) = \delta_{ij}, \quad
E(U_i U_j U_k U_l) = \delta_{ij} \delta_{kl} + 
\delta_{ik} \delta_{jl} + 
\delta_{il} \delta_{jk},\\
 E(U_i U_j U_k U_l U_m U_n) =
\underbrace{ \delta_{ij} \delta_{kl} \delta_{mn} +
\delta_{ik} \delta_{jl} \delta_{mn} + \cdots
+ \delta_{in} \delta_{jk} \delta_{lm}}_\text{15 terms of partitioning
 \{i,j,k,l,m,n\} into 3 pairs}.
\end{gather*}
For $k=1,2,\ldots$, the expectation of the product of $2k$ variables
 $E(U_{i_1} \cdots U_{i_{2k}})$ is the sum of $(2k)!/(2^k k!)$ terms of
 partitioning $\{i_1,\ldots,i_{2k}\}$ into $k$ pairs, where each term is
 the product of $k$ Kronecker deltas corresponding to the $k$ pairs.  On
 the other hand, odd-order moments are all zero;
\[
 E(U_i) = E(U_i U_j U_k) = E(U_i U_j U_k U_l U_m) = \cdots =0.
\]
 
\end{lemm}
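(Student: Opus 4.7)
The plan is to reduce the claim to univariate moments and then match the two resulting combinatorial expressions. Since the covariance matrix is the identity $I_q$, the variables $U_1,\ldots,U_q$ are mutually independent standard normals, and I would lean on this heavily.

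First I would dispose of the odd-order claim. The joint density of $(U_1,\ldots,U_q)$ is invariant under the sign flip $U\mapsto -U$, so for any index list $(i_1,\ldots,i_m)$ with $m$ odd we have $E(U_{i_1}\cdots U_{i_m}) = E((-U_{i_1})\cdots(-U_{i_m})) = -E(U_{i_1}\cdots U_{i_m})$, forcing the moment to vanish. Next, for even-order moments, I would factor using independence: for a list $(i_1,\ldots,i_{2k})$, let $n_j = |\{l : i_l=j\}|$ be the multiplicity of index $j$, so $\sum_j n_j = 2k$. Independence gives $E(U_{i_1}\cdots U_{i_{2k}}) = \prod_{j=1}^q E(U_j^{n_j})$. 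The univariate moments are computed from the $N(0,1)$ moment generating function $e^{t^2/2}$ (or by integration by parts using Stein's identity $E(Uf(U)) = E(f'(U))$): one gets $E(U^n)=0$ for $n$ odd and $E(U^n)=(n-1)!! = n!/(2^{n/2}(n/2)!)$ for $n$ even. So the left-hand side vanishes unless every $n_j$ is even, in which case it equals $\prod_j (n_j-1)!!$.

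The final step is combinatorial, matching the right-hand side to the same product. The right-hand side is $\sum_\pi \prod_{\{a,b\}\in\pi}\delta_{i_a i_b}$ where $\pi$ ranges over perfect matchings of $\{1,\ldots,2k\}$. A single term equals $1$ exactly when $\pi$ only pairs positions carrying identical index labels, and $0$ otherwise. Thus compatible matchings exist only if each $n_j$ is even, and when they do, their count factors as a product over the label classes: for each $j$, the number of perfect matchings of $n_j$ objects is $(n_j-1)!!$, so the total contribution is $\prod_j (n_j-1)!!$. This agrees with the expression produced by the independence argument, proving the stated identity. The universal count $(2k)!/(2^kk!) = (2k-1)!!$ of matchings is recovered as the special case where all $i_l$'s coincide, so every matching is compatible.

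The main obstacle is purely bookkeeping in this last combinatorial identification: one must verify cleanly that summing over all pair partitions of $\{1,\ldots,2k\}$ and restricting to those compatible with the labelling really does factor into a product over the label classes, each of size $n_j$, contributing $(n_j-1)!!$. Once this correspondence is explicit, both sides of the lemma are equal to $\prod_j (n_j-1)!!$ when all $n_j$ are even and to $0$ otherwise, and the general formula $E(U_{i_1}\cdots U_{i_{2k}}) = \sum_\pi \prod_{\{a,b\}\in\pi}\delta_{i_a i_b}$ follows.
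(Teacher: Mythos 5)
Your proof is correct, but it takes a different route from the paper. The paper disposes of the lemma in one line by citing the general result of Isserlis (1918) (Wick's theorem) for $U\sim N_q(0,\Sigma)$ with arbitrary covariance, of which the stated identity is the special case $\Sigma=I_q$ with $\Sigma_{ij}$ replaced by $\delta_{ij}$. You instead give a self-contained elementary proof that leans specifically on the identity covariance: independence reduces everything to univariate moments $E(U^n)=(n-1)!!$, the odd case is killed by the sign-flip symmetry, and the remaining work is the combinatorial identification of $\sum_\pi\prod_{\{a,b\}\in\pi}\delta_{i_a i_b}$ with $\prod_j(n_j-1)!!$ via matchings compatible with the index labelling. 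That last step is carried out correctly — a pair-partition term survives exactly when each pair joins equal labels, such partitions exist iff every multiplicity $n_j$ is even, and they factor over label classes into $(n_j-1)!!$ choices each. What your approach buys is a proof from first principles requiring no external reference; what it gives up is the generality of Isserlis' theorem, which covers correlated Gaussians where the independence-and-factorization step is unavailable and one must instead induct via Stein's identity $E(U_{i_1}f(U))=\sum_j\Sigma_{i_1 j}E(\partial f/\partial U_j)$. Either route is acceptable here since the lemma is only needed for $\Sigma=I_q$.
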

\begin{proof}[Proof of Lemma~\ref{lem:normalmoments}]
This lemma is a direct consequence of the general result of
\cite{Isserlis:1918:FPM} for $U\sim  N_q(0,\Sigma)$ with any covariance
 $\Sigma$.
\end{proof}

\begin{proof}[Proof of Theorem~\ref{thm:bp}]
We denote $Y^*=(U,V)$ in the $(u,v)$ coordinates so that
 (\ref{eq:yboot}) is expressed as
\[
 U \sim N_q(0,\sigma^2 I_q),\quad V \sim N(\lambda_0-h_0,\sigma^2). 
\]
The bootstrap probability for $\sigma^2=1$ is expressed as $P((U,V)\in
H) = P(V \le -h(U))= E\Bigl[ P(V \le -h(U)| U)\Bigr] = E\Bigl[
\Phi(-(\lambda_0-h_0) - h(U))\Bigr]$. For calculating the term in the
brackets, we consider the Taylor series
\begin{equation} \label{eq:taylorphi}
 \Phi(-a-x) =\Phi(-a) + \phi(a) \Bigl[
-x + \tfrac{1}{2}a x^2 + \tfrac{1}{6}(1-a^2) x^3 
\Bigr] + O(x^4),
\end{equation}
with $a=\lambda_0$ and $ x=h(U)-h_0
\simeq h_i U_i + h_{ij} U_i U_j +
h_{ijk} U_i U_j U_k +
h_{ijkl} U_i U_j U_k U_l.$
Then we have
\begin{equation} \label{eq:taylorbp}
 P((U,V)\in H) \simeq
\Phi(-a) + \phi(a) \Bigl[
-E(x) + \tfrac{1}{2}a E(x^2) + \tfrac{1}{6}(1-a^2) E(x^3 )
\Bigr].
\end{equation}
For calculating $E(x)$, $E(x^2)$, and $E(x^3)$, we use
Lemma~\ref{lem:normalmoments}. By noticing (\ref{eq:classscoef}), $ E(x)
\simeq h_{ij} E(U_i U_j) + h_{ijkl} E(U_i U_j U_k U_l) =h_{ij}
\delta_{ij} + h_{ijkl} (\delta_{ij} \delta_{kl} + \delta_{ik}
\delta_{jl} + \delta_{il} \delta_{jk}) =h_{ii} + 3 h_{iijj} = \gamma_1 +
3\gamma_4$.  $ E(x^2) \simeq h_{ij} h_{kl} E(U_i U_j U_k U_l) = h_{ij}
h_{kl} (\delta_{ij} \delta_{kl} + \delta_{ik} \delta_{jl} + \delta_{il}
\delta_{jk}) =h_{ii} h_{jj} + 2 h_{ij} h_{ij} = \gamma_1^2 + 2
\gamma_2$.  $ E(x^3)\simeq h_{ij} h_{kl} h_{mn} E(U_i U_j U_k U_l U_m
U_n) =h_{ij} h_{kl} h_{mn} ( \delta_{ij} \delta_{kl} \delta_{mn} +
\delta_{ik} \delta_{jl} \delta_{mn} + \cdots + \delta_{in} \delta_{jk}
\delta_{lm}) =h_{ii} h_{jj} h_{kk} + 6 h_{ii} h_{jk} h_{jk} + 8 h_{ij}
h_{jk} h_{ki} = \gamma_1^3 + 6 \gamma_1 \gamma_2 + 8 \gamma_3$.
Substituting these moments in (\ref{eq:taylorbp}), we have $P((U,V)\in
H)$ expressed as
\begin{equation} \label{eq:taylorbpe}
\Phi(-a) +\phi(a) \Bigl[
-( \gamma_1 + 3 \gamma_4)
 + \tfrac{1}{2} a
(\gamma_1^2 + 2 \gamma_2) + \tfrac{1}{6} (1-a^2)
(\gamma_1^3 + 6 \gamma_1 \gamma_2 + 8 \gamma_3)
\Bigr].
\end{equation}
Next, we consider (\ref{eq:taylorphi}) again, but with $a=\lambda_0$,
$x= \gamma_1 - a \gamma_2 + 3 \gamma_4 - \gamma_1 \gamma_2 -\tfrac{4}{3}
(1-a^2) \gamma_3$. Then we easily verify that the right hand side of
(\ref{eq:taylorphi}) gives (\ref{eq:taylorbpe}) by ignoring terms of
$\ordiiii$. Thus $P((U,V)\in H) \simeq \Phi(-a-x)=\bar \Phi(a+x)$, and
 we get (\ref{eq:bpexpgamma}).
\end{proof}

\begin{proof}[Proof of Theorem~\ref{thm:bps}]
Considering $Y^*\in H \Leftrightarrow \sigma^{-1}Y^*\in \sigma^{-1} H$,
we have $\mathrm{BP}_{\sigma^2}(H|y) = P_{\sigma^2}(Y^*\in H| y)
=P_{\sigma^2}(\sigma^{-1} Y^*\in \sigma^{-1} H| y)
=\mathrm{BP}_1(\sigma^{-1}H|\sigma^{-1}y)$, where the last equation
follows from $\sigma^{-1} Y^* \sim N_{q+1}(\sigma^{-1}y,I_{q+1})$.  This
proves (\ref{eq:rescaleregion}).  We only have to show that replacing
$H\to \sigma^{-1}H$ and $y\to \sigma^{-1}y$ in (\ref{eq:bpexpbeta})
implies (\ref{eq:rescalebeta}).  In the $(u,v)$ coordinates, the $k$-th
derivative $\partial^k h(u)/\partial u_{i1}\cdots \partial u_{ik} $ is
multiplied by $\sigma^{k-1}$, because all the terms in the numerator and
the denominator are scaled by $\sigma^{-1}$. Thus $H\to \sigma^{-1}H$ is
expressed as $h_0 \to \sigma^{-1}h_0, h_i \to h_i, h_{ij}\to \sigma
h_{ij}, h_{ijk} \to \sigma^2 h_{ijk}, h_{ijkl} \to \sigma^3 h_{ijkl}$,
and then $ \gamma_1 \to \sigma \gamma_1, \gamma_2 \to \sigma^2 \gamma_2,
\gamma_3 \to \sigma^3 \gamma_3, \gamma_4 \to \sigma^3 \gamma_4 $.  $y\to
\sigma^{-1}y$ is expressed as $ \lambda_0 \to \sigma^{-1}\lambda_0$.
Applying these rules to (\ref{eq:betabygamma}), we get
(\ref{eq:rescalebeta}).
\end{proof}

\begin{proof}[Proof of Lemma~\ref{lem:hlocal}]

A point $(\Delta u, -\tilde h(\Delta u))$ on $\partial H$ in the
$(\Delta u,\Delta v)$ coordinates is expressed as $(u+\Delta \tilde u,
-h(u+\Delta \tilde u))$ in the $(u,v)$ coordinates for some $\Delta
\tilde u = (\Delta \tilde u_1,\ldots \Delta \tilde u_q)
\in\mathbb{R}^q$.  Substituting $\Delta v = - \tilde h(\Delta u)$ in
(\ref{eq:uvlocaluv}), we have $ (u+\Delta \tilde u, -h(u+\Delta \tilde
u)) = (u,-h(u)) + \Delta u_i b_i - \tilde h(\Delta u) \|f\|^{-1}f
 $, and thus, using the definitions of $b_i$ and $f$, we get
\begin{gather}
 \Delta \tilde u_i = \Delta  u_i - \tilde h(\Delta u)
  \|f\|^{-1}
\frac{\partial h}{\partial u_i},\quad i=1,\ldots,q,
 \label{eq:solvehui}\\
 h(u+\Delta \tilde u) = h(u) + \Delta u_i \frac{\partial h}{\partial
  u_i}
+ \tilde h(\Delta u)\|f\|^{-1}.
 \label{eq:solvehv}
\end{gather}
We are going to solve these equations to find the expression of $\tilde
h(\Delta u)$ by eliminating $\Delta \tilde u$ from (\ref{eq:solvehui})
and (\ref{eq:solvehv}). We first consider the asymptotic order of the
terms in (\ref{eq:solvehui}).  $\tilde h(\Delta u)=\ordi$ because
$h(u)=\ordi$ for any $u$. $\|f\|=1+\ordii$ as shown later.  $\partial
h/\partial u_i=h_i + 2h_{ij} u_j + \cdots = \ordi$. It then follows from
(\ref{eq:solvehui}) that $\Delta \tilde u_i - \Delta u_i = O(n^{-1/2}\,
n^{-1/2} ) = \ordii$.  We next consider the Taylor expansion of $h(u +
\Delta \tilde u)$ around $u + \Delta u$.  $ h(u + \Delta \tilde u)
\simeq h(u + \Delta u) + (\partial h/\partial u_i |_{u+\Delta u})
(\Delta \tilde u_i - \Delta u_i) + O(n^{-1/2}\,\|\Delta \tilde u_i -
\Delta u_i\|^2) \simeq h(u + \Delta u) - (\partial h/\partial u_i
|_{u+\Delta u}) \tilde h(\Delta u) \|f\|^{-1} (\partial h/\partial u_i
|_u )$. Substituting this into the left hand side of
 (\ref{eq:solvehv}), we solve the equation for $\tilde h(\Delta
 u)$. Then we have $\tilde h(\Delta u) \simeq
\|f\| A B$ with
\[
A = \biggl(1 +  \frac{\partial h} {\partial u_i}\Bigr|_{u+\Delta u}
\frac{\partial h}{\partial u_i}
  \biggr)^{-1},\quad B= h(u+\Delta u) - h(u) - \Delta u_i \frac{\partial
  h}{\partial   u_i}.
\]
We look at the three factors $\|f\|$, $A$ and $B$.  The first factor is
$ \|f\| = \{ 1+ \sum_{i=1}^q (\partial h/\partial u_i)^2\}^{1/2} \simeq
1+ (1/2) \sum_{i=1}^q (\partial h/\partial u_i)^2 $. By noting $\partial
h/\partial u_i \simeq h_i + 2 h_{ij} u_j + 3h_{ijk} u_j u_k + 4 h_{ijkl}
u_j u_k u_l$, we have $ \|f\| \simeq 1 + 2 h_{ij} h_{ik} u_j u_k + 2h_i
h_{ij} u_j + 6 h_{ij} h_{ikl} u_j u_k u_l = 1 + 2 h_{ij} h_{ik} u_j u_k
+ \ordiii$.  The second factor is $A \simeq 1 - (\partial h/\partial
u_i|_{u+\Delta u}) ( \partial h/\partial u_i) \simeq 1 - \{2 h_{ij}
(u_j+\Delta u_j)+ \ordii\}\{ 2 h_{ik} u_k + \ordii\} = 1 - 4 h_{ij}
h_{ik} u_j u_k - 4 h_{ij} h_{ik} \Delta u_j u_k + \ordiii $.  The third
factor is $B \simeq (1/2) (\partial^2 h/ \partial u_i \partial
u_j)\Delta u_i \Delta u_j + (1/6)(\partial^3/\partial u_i \partial u_j
\partial u_k)\Delta u_i \Delta u_j \Delta u_k +
(1/24)(\partial^4/\partial u_i \partial u_j \partial u_k \partial
u_l)\Delta u_i \Delta u_j \Delta u_k \Delta u_l \simeq (h_{ij} + 3
h_{ijk} u_k + 6 h_{ijkl} u_k u_l) \Delta u_i \Delta u_j +(h_{ijk} + 4
h_{ijkl} u_l) \Delta u_i \Delta u_j \Delta u_k + h_{ijkl} \Delta u_i
\Delta u_j \Delta u_k \Delta u_l $.  Simply multiplying the three
factors and collect terms with respect to $\Delta u$, we
obtain $\tilde h(\Delta u) \simeq \|f\| A B
\simeq B + \Delta u_i \Delta u_j h_{ij} ( 2 h_{mk}h_{ml}u_k u_l
-4 h_{mk}h_{ml}u_k u_l) + \Delta u_i \Delta u_j \Delta u_k h_{ij}
(-4h_{mk}h_{ml}u_l)
\simeq \Delta u_i \Delta u_j \{ h_{ij} + 3
h_{ijk} u_k + (6 h_{ijkl} - 2 h_{ij} h_{mk} h_{ml}) u_k u_l\} + \Delta
u_i \Delta u_j \Delta u_k \{ h_{ijk} + (4 h_{ijkl} - 4h_{ij} h_{mk}
h_{ml}) u_l \} + \Delta u_i \Delta u_j \Delta u_k \Delta u_l h_{ijkl}
$. Looking at the coefficients, we get $\tilde h_{ij}$ and $\tilde
h_{ijkl}$. We also get $\tilde h_{ijk} = h_{ijk} + (4 h_{ijkl} -
4h_{ij} h_{mk} h_{ml}) u_l$, which becomes $\tilde h_{ijk}$ in the
lemma by symmetrization with respect to permutation of indices.

\end{proof}

\begin{proof}[Proof of Lemma~\ref{lem:glocal}]

Consider a change of coordinates $x \leftrightarrow \Delta u$ in the
tangent space as $c_i x_i = b_i \Delta u_i$. Treating $c_i$, $b_i$, $x$,
$\Delta u$ as column vectors (although they were defined as row vectors
earlier), we write $C x = B \Delta u$ in the matrix notation using
$C=(c_1,\ldots,c_q)$, $B=(b_1,\ldots, b_q)$, and thus $\Delta u = B^{-1}
C x$. Considering $\tilde h(\Delta u) = d(x)$ for any $x$, we have
$\tilde h_{ij} \Delta u_i \Delta u_j = d_{ij}x_i x_j $, $\tilde h_{ijk}
\Delta u_i \Delta u_j \Delta u_k= d_{ijk} x_i x_j x_k$, etc.
Substituting $\Delta u = B^{-1} C x$ in $\Delta u^T \tilde D \Delta u =
x^T D x$, we have $D = C^T (B^{-1})^T \tilde D B^{-1} C$, where $T$
denotes matrix transpose.  Noting $C^T C= C C^T =I$ and $B^T B = G$, we
obtain $\Tr(D) = \Tr(C^T (B^{-1})^T \tilde D B^{-1} C) = \Tr(\tilde D
B^{-1} C C^T (B^{-1})^T ) = \Tr(\tilde D B^{-1} (B^{-1})^T ) =
\Tr(\tilde D (B^T B)^{-1} ) = \Tr(\tilde D G^{-1} ) $, thus proving the
first equation for $\gamma_1(h,u)$ in (\ref{eq:uvgamma}). Similarly,
$\Tr(D^2)=\Tr((C^T (B^{-1})^T \tilde D B^{-1} C )^2 ) = \Tr((\tilde D
G^{-1} )^2 )$ for $\gamma_2(h,u)$, and $\Tr(D^3)=\Tr((C^T (B^{-1})^T
\tilde D B^{-1} C )^3 ) = \Tr((\tilde D G^{-1} )^3 )$ for
$\gamma_3(h,u)$.  For $\gamma_4(h,u)$, applying the argument of
$\gamma_1(h,u)$ twice to $(i,j)$ and $(k,l)$ in $\tilde h_{ijkl} \Delta
u_i \Delta u_j \Delta u_k \Delta u_l = d_{ijkl} x_i x_j x_k x_l$, we get
the last equation in (\ref{eq:uvgamma}).

For deriving the asymptotic expansions of $\gamma_i$'s in
 (\ref{eq:hijgamma}), we first consider $g_{ij} = b_i \cdot b_j=\delta_i
 \cdot \delta_j + (\partial h/\partial u_i) (\partial h/\partial u_j) =
 \delta_{ij} + (2 h_{ik} u_k + \ordii) (2h_{jl} u_l + \ordii) =
 \delta_{ij} + 4h_{ik} h_{jl} u_k u_l + \ordiii$. Since $(I_q+A)^{-1} =
 I_q - A + A^2 - \cdots$, the elements of $G^{-1}$ are $ g^{ij} =
 \delta_{ij} - 4h_{ik} h_{jl} u_k u_l + \ordiii$. Noting the expression
 of $\tilde h_{ij}$ shown in Lemma~\ref{lem:hlocal}, we have
 $\gamma_1(h,u)=\tilde h_{ij} g^{ij}\simeq \tilde h_{ii} -4 h_{ij}
 h_{ik} h_{jl} u_k u_l \simeq h_{ii} + 3 h_{iik} u_k + (6 h_{iikl} -2
 h_{ii} h_{mk} h_{ml} -4 h_{ij} h_{ik} h_{jl}) u_k u_l $. Also
 $\gamma_2(h,u)= \tilde h_{ij} g^{jk} \tilde h_{kl} g^{li} =\tilde
 h_{ij} (\delta_{jk}+\ordii) \tilde h_{kl} (\delta_{li}+\ordii) \simeq
 \tilde h_{ij} \tilde h_{ij} =
 (h_{ij}+3h_{ijk}u_k+\ordiii)(h_{ij}+3h_{ijl}u_l + \ordiii) \simeq
 h_{ij} h_{ij} + 6 h_{ij} h_{ijk} u_k$. Similarly, $\gamma_3(h,u)\simeq
 \tilde h_{ij} \tilde h_{jk} \tilde h_{ki} \simeq h_{ij} h_{jk} h_{ki}
 $, and $\gamma_4(h,u)\simeq \tilde h_{iijj} \simeq h_{iijj} $.

\end{proof}

\begin{proof}[Proof of Lemma~\ref{lem:twosurfaces}]
We again write $f$ for $f(u)$. By looking at each element of
 (\ref{eq:sdefinition}), we have
\begin{gather}
\theta_i = u_i + \lambda(u) \|f\|^{-1} \frac{\partial h}{\partial
 u_i},\quad
i=1,\ldots,q,
  \label{eq:sdefu}\\
 s(\theta) = h(u) - \lambda(u) \|f\|^{-1}.
 \label{eq:sdefv}
\end{gather}
We are going to solve these equations to find the expression of
$s(\theta)$ by eliminating $u$ in (\ref{eq:sdefu}) and (\ref{eq:sdefv}).
First, we rearrange the right hand side of (\ref{eq:sdefv}) to have an
expression of $a(u) = h(u) - \lambda(u) \|f\|^{-1}$.  Noting the
expression of $\|f\|$ in the proof of Lemma~\ref{lem:hlocal}, we have $
\|f\|^{-1} \simeq 1 - 2 h_{ij} h_{ik} u_j u_k - 2h_i h_{ij} u_j - 6
h_{ij} h_{ikl} u_j u_k u_l$, and then $\lambda(u) \|f\|^{-1} \simeq
\lambda(u) - \lambda_0 (1- \|f\|^{-1}) \simeq \lambda(u) - \lambda_0( 2
h_{ij} h_{ik} u_j u_k + 2h_i h_{ij} u_j + 6 h_{ij} h_{ikl} u_j u_k u_l
)$. Thus the coefficients of $a(u)$ are $a_0=h_0-\lambda_0$,
$a_i=h_i-\lambda_i + 2\lambda_0 h_m h_{mi}$, $a_{ij}=h_{ij}-\lambda_{ij}
+2\lambda_0 h_{mi} h_{mj}$, $a_{ijk} = h_{ijk} +6\lambda_0
h_{mi}h_{mjk}$, $a_{ijkl}=h_{ijkl}$.  We leave terms such as
$h_{mi}h_{mjk}=\ordiii$ in $a_{ijk}$ unsymmetrical with respect to
permutation of indices for brevity.  Next, we verify that
\begin{equation} \label{eq:solvedui}
u_i = \theta_i  - \lambda_0 h_i - 2\lambda_0 h_{ij}  \theta_j
 + 4\lambda_0^2 h_{ij} h_{jk} \theta_k
-3\lambda_0 h_{ijk} \theta_j \theta_k + \ordiii
\end{equation}
is the solution of (\ref{eq:sdefu}) up to $\ordii$ terms.  Noting
$\lambda(u) \|f\|^{-1}=\lambda_0+\ordii$ and $\partial h/\partial u_i =
h_i + 2 h_{ij} u_j + 3h_{ijk} u_j u_k + \ordiii$, (\ref{eq:sdefu}) is
expressed as $ \theta_i = u_i + \lambda_0 (h_i + 2 h_{ij} u_j + 3
h_{ijk} u_j u_k) + \ordiii$. By substituting (\ref{eq:solvedui}) into
 it, we have
 $\theta_i = u_i + \lambda_0 \{ h_i + 2h_{ij}(\theta_j -
2\lambda_0 h_{jk}\theta_k + \ordii)
+3h_{ijk}(\theta_j+\ordi)(\theta_k+\ordi)\} = u_i + \lambda_0 h_i +
2\lambda_0 h_{ij}\theta_j - 4\lambda_0^2 h_{ij} h_{jk} \theta_k +
3\lambda_0 h_{ijk}\theta_j \theta_k + \ordiii = \theta_i + \ordiii$,
confirming the solution.

We then substitute (\ref{eq:solvedui}) into $a_{i} u_i$, $a_{ij} u_i
u_j$, $a_{ijk} u_i u_j u_k$, $a_{ijkl} u_i u_j u_k u_l$. They are $a_i
u_i \simeq (a_i - 2\lambda_0 a_m h_{mi}) \theta_i $, $a_{ij} u_i u_j
\simeq (-2\lambda_0 a_{mi}h_m) \theta_i +(a_{ij} -4\lambda_0 a_{mi}
h_{mj} + 4\lambda_0^2 a_{ml} h_{mi}h_{lj} + 8\lambda_0^2 a_{mi} h_{ml}
h_{lj}) \theta_i\theta_j + (-6\lambda_0 a_{mi} h_{mjk})
\theta_i\theta_j\theta_k$, $a_{ijk} u_i u_j u_k \simeq (a_{ijk}
-6\lambda_0 a_{mij}h_{mk}) \theta_i\theta_j\theta_k$, $a_{ijkl}u_i u_j
u_k u_l \simeq a_{ijkl}\theta_i \theta_j \theta_k \theta_l$.  After
rearranging the terms of $a(u)$, we get the expression of $s(\theta)$
with respect to $\theta$ as $s(\theta) \simeq (h_0-\lambda_0) +
(h_i-\lambda_i - 2\lambda_0 h_{mi} (h_m - \lambda_m)) \theta_i +
(h_{ij}-\lambda_{ij} - 2\lambda_0 h_{mi} h_{mj} + 4\lambda_0^2 h_{ml}
h_{mi} h_{lj}) \theta_i \theta_j + (h_{ijk} - 6\lambda_0 h_{mi} h_{mjk})
\theta_i \theta_j \theta_k + h_{ijkl} \theta_i \theta_j \theta_k
\theta_l$. Therefore, we obtain the coefficients $s_0$, $s_i$, $s_{ij}$
and $s_{ijkl}$ as those given in the lemma. We also get $s_{ijk} =
h_{ijk} - 6\lambda_0 h_{mi} h_{mjk}$, which becomes that given in the
lemma by symmetrization with respect to permutation of
indices. 
\end{proof}

\begin{proof}[Proof of Lemma~\ref{lem:contourbp}]
For $(\theta,-s(\theta))$ in (\ref{eq:sdefinition}), we will solve $
\mathrm{BP}_{\sigma^2}( \mathcal{R}(h) | (\theta,-s(\theta))) = 1
-\alpha $ with respect to $\lambda(u)$.  We apply Theorem~\ref{thm:bps}
to $y=(\theta,-s(\theta))$.  Let $\hat\gamma_i=\gamma_i(h,u)$,
$i=1,\ldots,4$ be the geometric quantities at $(u,-h(u)) =
\hat\mu(H|(\theta, -s(\theta)))$.  It follows from (\ref{eq:bpsexp}) by
replacing $\lambda_0\to\lambda(u)$, $\gamma_i\to \hat\gamma_i$,
$h_0=h_i=0$ that
\[
z_\alpha \simeq \sigma^{-1} \lambda(u) + \sigma \Bigl(\hat\gamma_1 - \lambda(u)
 \hat\gamma_2 + \tfrac{4}{3} \lambda(u)^2 \hat\gamma_3\Bigr)
+ \sigma^3 \Bigl(3\hat\gamma_4 - \hat\gamma_1 \hat\gamma_2 - \tfrac{4}{3}
 \hat\gamma_3 \Bigr).
\]
Solving this equation with respect to $\lambda(u)$, we obtain
\begin{equation} \label{eq:lambdauassolution}
 \lambda(u)\simeq
\sigma z_\alpha - \sigma^2 \hat\gamma_1 + \sigma^3 z_\alpha \hat\gamma_2
+ \sigma^4 \Bigl( \tfrac{4}{3} (1-z_\alpha^2) \hat\gamma_3- 
3\hat\gamma_4\Bigr),
\end{equation}
which is easily verified by substituting (\ref{eq:lambdauassolution})
into the equation as $z_\alpha \simeq \sigma^{-1} \lambda(u) + \sigma \{
\hat\gamma_1 - (\sigma z_\alpha - \sigma^2 \hat \gamma_1 + \ordii) \hat
\gamma_2 + \tfrac{4}{3}(\sigma z_\alpha + \ordi)^2 \hat\gamma_3 \} +
\sigma^3 (3 \hat\gamma_4 - \hat\gamma_1 \hat\gamma_2 - \tfrac{4}{3} \hat
\gamma_3) \simeq \sigma^{-1} \lambda(u) + \sigma \hat\gamma_1 + \sigma^2
(-z_\alpha \hat \gamma_2) + \sigma^3 (\tfrac{4}{3}(z_\alpha^2 - 1)
\hat\gamma_3 + 3 \hat\gamma_4) \simeq z_\alpha$.  Substituting
(\ref{eq:hijgamma}) for $\hat\gamma_i=\gamma_i(h,u)$ in
(\ref{eq:lambdauassolution}), we have $\lambda_0=\lambda(0)$,
$\lambda_i=-3\sigma^2 h_{mmi}+ 6 \sigma^3 z_\alpha h_{ml} h_{mli}$ and
$\lambda_{ij} = \sigma^2 (-6 h_{mmij} +2 h_{mm} h_{li} h_{lj} + 4 h_{ml}
h_{mi} h_{lj})$.  For proving (\ref{eq:lambdaucontour}), we eliminate
$\alpha$ from $\lambda_i$ by $z_\alpha = \sigma^{-1} \lambda_0 + \ordi$,
and we get $\lambda_i=\sigma^2 ( -3 h_{mmi} + 6\lambda_0 h_{ml}
h_{mli})$.  By applying Theorem~\ref{lem:twosurfaces} to this
$\lambda(u)$, we obtain (\ref{eq:contourcoef}) and
(\ref{eq:contourgamma}); actually we only have to check $s_i$, $s_{ij}$,
and $\gamma_1(s,0)$ as follows.  $s_i=h_i - \sigma^2(-3h_{mmi} +
6\lambda_0 h_{ml} h_{mli}) -2 \lambda_0 h_{mi} (h_m -
\sigma^2(-3h_{mll}+\ordiii))$, $s_{ij}=h_{ij}-\sigma^2 (-6 h_{mmij} +2
h_{mm} h_{li} h_{lj} + 4 h_{ml} h_{mi} h_{lj}) - 2\lambda_0 h_{mi}
h_{mj} + 4\lambda_0^2 h_{ml} h_{mi} h_{lj}$, and $\gamma_1(s,0) =
\gamma_1 - \lambda_{ii} - 2\lambda_0 \gamma_2 + 4\lambda_0^2 \gamma_3$
with $\lambda_{ii} = \sigma^2 (-6\gamma_4 + 2\gamma_1 \gamma_2 +
4\gamma_3)=-\sigma^2 \beta_3$.
\end{proof}

\begin{proof}[Proof of Lemma~\ref{lem:additivity}]
Let $s = \mathcal{L}_{\sigma^2}(h,\lambda_0)$ and $r =
\mathcal{L}_{\tau^2}(s,\xi_0)$. Applying Lemma~\ref{lem:contourbp} to
$r$, we have the coefficients of $r$ in terms of $s$ and $\xi_0$ such as
$r_0=s_0-\xi_0$ from (\ref{eq:contourcoef}).  Then substitute the
coefficients of $s$ in terms of $h$ and $\lambda_0$, such as
$s_0=h_0-\lambda_0$, into those of $r$ to get, say,
$r_0=(h_0-\lambda_0)-\xi_0 = h_0 - (\lambda_0+\xi_0)$.  After
rearranging terms, we get the other coefficients as $r_{ij}=h_{ij} -
2(\lambda_0+\xi_0)h_{mi}h_{mj} + 4(\lambda_0+\xi_0)^2 h_{ml} h_{mi}
h_{lj} + (\sigma^2 + \tau^2) (6 h_{mmij} - 2 h_{mm} h_{li} h_{lj} - 4
h_{ml} h_{mi} h_{lj})$, $r_{ijk} = h_{ijk}- 2(\lambda_0+\xi_0) (h_{mi}
h_{mjk}+h_{mj} h_{mik}+h_{mk} h_{mij} ) $, $r_{ijkl}=h_{ijkl}$. The
additivity in terms of $\sigma^2+\tau^2$ and $\lambda_0+\xi_0$ holds for
these four coefficients, thus proving (\ref{eq:additivity}). For $r_i =
h_i - 2(\lambda_0 +\xi_0) h_m h_{mi} + (\sigma^2 + \tau^2) (3h_{mmi} -
6(\lambda_0+\xi_0) h_{ml} h_{mli} -6(\lambda_0 + \xi_0) h_{mi} h_{mll})
+ (\sigma^2 \xi_0 - \tau^2 \lambda_0)6h_{ml} h_{mli}$, the additivity
holds except for the last term. Thus ``$\doteq$'' in
(\ref{eq:additivity}) is replaced by ``$\simeq$'' when $\sigma^2 \xi_0 -
\tau^2 \lambda_0=0$. In particular,
$\mathcal{L}_{-\sigma^2}(\mathcal{L}_{\sigma^2}(h,\lambda_0),-\lambda_0)
\simeq \mathcal{L}_0(h,0) \simeq h$.
\end{proof}

\begin{proof}[Proof of Theorem~\ref{thm:pval}]
We only have to show that $\gamma_i\to\gamma_i(s,0)$,
$\lambda_0\to-\lambda_0$ with $s=\mathcal{L}_{-1}(h,\lambda_0)$ leads to
$\beta_0\to -\beta_0$, $\beta_1\to\beta_1-\beta_3$, $\beta_2\to\beta_2$
as mentioned just before the theorem.  Here we show a generalized result
for $s=\mathcal{L}_{\sigma^2}(h,\lambda_0)$ to be used later again.  The
geometric quantities are given in (\ref{eq:contourgamma}) of
Lemma~\ref{lem:contourbp}.  We replace $\gamma_i\to\gamma_i(s,0)$ and
$\lambda_0\to-\lambda_0$ in $\beta_0,\beta_1,\beta_2$ of
(\ref{eq:betabygamma}).  They become $\beta_0 \to -\lambda_0 =
-\beta_0$, $\beta_1 \to (\gamma_1 -2\lambda_0 \gamma_2 + 4\lambda_0^2
\gamma_3) +\sigma^2 \beta_3 - (-\lambda_0)(\gamma_2 - 4\lambda_0
\gamma_3) + \tfrac{4}{3} (-\lambda_0)^2 \gamma_3 = \gamma_1 - \lambda_0
\gamma_2 + \tfrac{4}{3} \lambda_0^2 \gamma_3 + \sigma^2 \beta_3 =
\beta_1 + \sigma^2\beta_3$, $\beta_2 \to 3\gamma_4 - (\gamma_1 +
\ordii)(\gamma_2 + \ordiii) - \tfrac{4}{3}\gamma_3 \simeq \beta_2$.
\end{proof}

\begin{proof}[Proof of Theorem~\ref{thm:dbpexp}]
We first consider the case of $\tilde \mu=(0,-h_0)$ with $\theta=0$.
For applying Theorem~\ref{thm:bps} to $
\mathrm{BP}_{\tau^2}(\mathcal{R}(s)| (0, -h_0)$ with
$s=\mathcal{L}_{\sigma^2}(h,\lambda_0)$, we would like to replace $h\to
s$ and $\lambda_0-h_0 \to -h_0$ in $\mathrm{BP}_{\tau^2}(\mathcal{R}(h)
| (0,\lambda_0-h_0))$.  We replace $\sigma^2\to\tau^2$,
$\gamma_i\to\gamma_i(s,0)$, $\lambda_0\to-\lambda_0$ in
(\ref{eq:bpsexp}).  This results in $\beta_0\to-\beta_0$,
$\beta_1\to\beta_1 + \sigma^2 \beta_3$, $\beta_2\to\beta_2$ as shown in
the proof of Theorem~\ref{thm:pval}, and thus
$\widetilde{\mathrm{DBP}}_{\tau^2,\sigma^2}(H|y)\simeq
1-\bar\Phi((-\beta_0)\tau^{-1} + (\beta_1+\sigma^2 \beta_3)\tau +
\beta_2 \tau^3 )$, giving the right hand side of (\ref{eq:dbpexp}).

Next, we compute $\widetilde{\mathrm{DBP}}_{\tau^2,\sigma^2}(H|y) $ with
$\tilde \mu=(\theta,-h(\theta))$ for $\theta=\ordz$. We only have to
replace $\beta_0,\ldots,\beta_3$ in (\ref{eq:dbpexp}) by those evaluated
at $\tilde\mu$, denoted $\tilde\beta_0,\ldots,\tilde\beta_3$.  Replacing
$\lambda_0\to \lambda_{\sigma^2}(\theta)$,
$\gamma_i\to\gamma_i(h,\theta)$ in (\ref{eq:betabygamma}), we have
$\tilde\beta_0 = \lambda_{\sigma^2}(\theta) = \beta_0 - \sigma^2
\kappa(\theta)$, $\tilde\beta_1 \simeq \gamma_1(h,\theta) -
\lambda_{\sigma^2}(\theta) \gamma_2(h,\theta) +
\tfrac{4}{3}\lambda_{\sigma^2}(\theta)^2 \gamma_3(h,\theta) = \beta_1 +
\kappa(\theta)$, $\tilde \beta_2\simeq \beta_2$, $\tilde \beta_3 \simeq
\beta_3$. Therefore,
$\widetilde{\mathrm{DBP}}_{\tau^2,\sigma^2}(H|y)\simeq \bar\Phi(
(\beta_0 - \sigma^2 \kappa(\theta))\tau^{-1} - (\beta_1 +
\kappa(\theta)) \tau - \beta_2 \tau^3 - \beta_3 \tau \sigma^2))$, giving
(\ref{eq:dbpexptheta}).
\end{proof}

\begin{proof}[Proof of Theorem~\ref{thm:bpacc}]
Let $\lambda_0\in\mathbb{R}$ be the solution of the equation
$\mathrm{NBP}_{\sigma^2}(H|(0,\lambda_0-h_0))=\alpha$. From
(\ref{eq:nbps}), the equation is expressed as $\beta_0 + \beta_1
\sigma^2 + \beta_2 \sigma^4 \simeq -z_\alpha$ with
(\ref{eq:betabygamma}).  By solving it with respect to $\lambda_0$, we
get $ \lambda_0 = -z_\alpha - \sigma^2(\gamma_1 + z_\alpha \gamma_2 +
\tfrac{4}{3} z_\alpha^2 \gamma_3) - \sigma^4 (3\gamma_4 - \tfrac{4}{3}
\gamma_3)$. This is easily verified by substituting it into the left
hand side of the equation as $\lambda_0 + \sigma^2( \gamma_1 -
(-z_\alpha - \sigma^2 \gamma_1) \gamma_2 + \tfrac{4}{3}(-z_\alpha)^2
\gamma_3) + \sigma^4 \beta_2 \simeq -z_\alpha$.  For $\tilde
\mu=(0,-h_0)$, the right hand side of (\ref{eq:dbpexp}) gives
$\widetilde{\mathrm{DBP}}_{1,\sigma^2}(H|(0,\lambda_0-h_0)) \simeq
\Phi(-\beta_0 + \beta_1 + \beta_2 + \sigma^2\beta_3) \simeq
\Phi(-\lambda_0 + \gamma_1 - (-z_\alpha - \sigma^2 \gamma_1) \gamma_2 +
 \tfrac{4}{3} (-z_\alpha)^2 \gamma_3 + \beta_2 + \sigma^2\beta_3
 )$. This becomes (\ref{eq:rejbpexp}) by collecting terms with respect
 to $\sigma^2$ after substituting the expression of $\lambda_0$.
\end{proof}

\begin{proof}[Proof of Theorem~\ref{thm:dbpacc}]
Let $\lambda_0\in\mathbb{R}$ be the solution of the equation
$\mathrm{DBP}_{1,\sigma^2}(H|(0,\lambda_0-h_0))=\alpha$.  From
(\ref{eq:dbpexp}), the equation is expressed as $\beta_0 - \beta_1 -
\beta_2 - \sigma^2 \beta_3 \simeq -z_\alpha$ with
(\ref{eq:betabygamma}).  By solving it with respect to $\lambda_0$, we
get $ \lambda_0 = -z_\alpha +\gamma_1 + z_\alpha \gamma_2 - 2\gamma_1
\gamma_2 + 3\gamma_4 + \tfrac{4}{3} \gamma_3 (z_\alpha^2-1) + \sigma^2
\beta_3 $. We define $\lambda \in\mathcal{S}$ by substituting
$\gamma_i(h,u)$ for $\gamma_i$ in the expression of $\lambda_0$. Then
$\lambda(u) \simeq -z_\alpha +\gamma_1(h,u) + z_\alpha \gamma_2(h,u) -
2\gamma_1 \gamma_2 + 3\gamma_4 + \tfrac{4}{3} \gamma_3 (z_\alpha^2-1) +
\sigma^2 \beta_3 \simeq \lambda_0 + (3h_{mmi} + 6 z_\alpha h_{ml}
h_{mli}) u_i + ( 6h_{mmij} - 2\gamma_1 h_{mi} h_{mj} - 4h_{ml} h_{mi}
h_{lj}) u_i u_j$. Noting $z_\alpha = -\lambda_0 + \ordi$, we find
$\lambda(u) \simeq \lambda_0 + \kappa(u)$, where $\kappa(u)$ is defined
in (\ref{eq:kappa}). Using this $\lambda(u)$, the contour surface
$\mathrm{DBP}_{1,\sigma^2}(H|y)=\alpha$ is expressed as
$y\in\mathcal{B}(s)$ for $s=\mathcal{M}(h,\lambda) \simeq
\mathcal{L}_{-1}(h,\lambda_0)$.  For $\tilde \mu=(0,-h_0)$, the right
hand side of (\ref{eq:dbpexp}) gives
$\widetilde{\mathrm{DBP}}_{1,-1}(H|(0,\lambda_0-h_0)) \simeq
\Phi(-\beta_0 + \beta_1 + \beta_2 - \beta_3) \simeq \Phi(-\lambda_0 +
\gamma_1 - (-z_\alpha + \gamma_1 )\gamma_2 + \tfrac{4}{3} (-z_\alpha)^2
\gamma_3 + \beta_2 -\beta_3) \simeq \Phi(z_\alpha -(1+\sigma^2)
 \beta_3)$, showing (\ref{eq:rejdbpexp}).
\end{proof}

% AOS,AOAS: If there are supplements please fill:
%\begin{supplement}[id=suppA]
%  \sname{Supplement A}
%  \stitle{Title}
%  \slink[doi]{10.1214/00-AOASXXXXSUPP}
%  \sdatatype{.pdf}" 
%  \sdescription{Some text}
%\end{supplement}

\bibliographystyle{imsart-nameyear}
\bibliography{stat2013}

\end{document}